 \newcommand{\R}{\mathbb R}
 \newcommand{\N}{\mathbb N}
\newcommand{\Svarcprim}{{S'(\R)}}
\begin{document}

\newtheorem{theorem}{Theorem}[section]
\newtheorem{proposition}[theorem]{Proposition}
\newtheorem{corollary}[theorem]{Corollary}
\newtheorem{lemma}[theorem]{Lemma}
\newtheorem{definition}[theorem]{Definition}
\newtheorem{example}[theorem]{Example}
\newtheorem{remark}[theorem]{Remark}

\title{\bf Stochastic evolution equations with multiplicative noise}

\author{
Tijana~Levajkovi\'c\thanks{University of Belgrade, Serbia, t.levajkovic@sf.bg.ac.rs} ,
Stevan~Pilipovi\'c\thanks{Faculty of Sciences, University of Novi Sad, Serbia, stevan.pilipovic@dmi.uns.ac.rs} ,
Dora~Sele\v si\thanks{Faculty of Sciences, University of Novi Sad, Serbia, dora.selesi@dmi.uns.ac.rs} ,
Milica \v Zigi\'c\thanks{Faculty of Sciences, University of Novi Sad, Serbia, milica.zigic@dmi.uns.ac.rs}
	} 
\date{}
\maketitle


\bigskip

\begin{abstract}
We study parabolic stochastic partial differential equations
(SPDEs), driven by two types of operators: one linear closed
operator generating a $C_0-$semigroup and one linear bounded
operator with Wick-type multiplication, all of them set in the
infinite dimensional space framework of white noise analysis. We
prove existence and uniqueness of solutions for this class of SPDEs.
In particular, we also treat the stationary case when the
time-derivative is equal to zero.
\end{abstract}


 \section{Introduction and definitions} \label{sec:intro}

We  consider a stochastic Cauchy problem of the form
\begin{equation}\label{nasaJNAuvod}\begin{split}\frac{d}{dt}U(t,x,\omega) &= {\mathbf A}U(t,x,\omega) + {\mathbf B}\lozenge U(t,x,\omega) + F(t,x,\omega)\\
U(0,x,\omega)&=U^0(x,\omega),\end{split}\end{equation} where
$t\in(0,T]$, $\omega\in\Omega$, and $U(t,\cdot,\omega)$ belongs to some Banach space
$X$. The operator $\mathbf A$ is densely defined, generating a
$C_0-$semigroup and $\mathbf B$ is a linear bounded
operator which combined with the Wick product $\lozenge$ introduces
convolution-type perturbations into the equation. All stochastic processes
are considered in the setting of Wiener-It\^o chaos expansions. A
comprehensive explanation of the action of the operators $\mathbf A$ and
$\mathbf B$ in this framework will be provided in Section \ref{0.3}.

Our investigations in this paper are inspired by \cite{Boris} where
the authors provide a comprehensive analysis of equations of the
form $$\frac{d}{dt}u(t,x,\omega) = {\mathbf A} u(t,x,\omega) +
\delta({\mathbf M}u(t,x,\omega)) = {\mathbf A} u(t,x,\omega) + \int
{\mathbf M} u(t,x,\omega)\lozenge W(x,\omega)\;dx,$$ where $\delta$
denotes the Skorokhod integral and $W$ denotes the spatial white
noise process. In Proposition \ref{Dora2} we prove that for every
operator ${\mathbf M}$ there exists a corresponding operator
$\mathbf B$ such that ${\mathbf B}\lozenge u=\delta({\mathbf M}u)$.
On the other hand, the class of operators $\mathbf B$ is much
larger. This holds also for the class of operators $\mathbf A$ we
consider (a comprehensive analysis of all operators is given in
Section \ref{0.4}). Thus, we extend the results of \cite{Boris} and
\cite{Boris2} to a more general class of stochastic differential
equations which are driven by two linear multiplicative operators: $\mathbf A$ acting
with ordinary multiplication, while $\mathbf B\lozenge$ is
acting with the convolution-type Wick product.

We have studied elliptic SPDEs, in particular the  stochastic
Dirichlet problem of the form ${\mathbf L}\lozenge u+f=0$ in our
previous papers \cite{JSAA}, \cite{ps}, \cite{DP2}. As a conclusion
to this series of papers we study parabolic SPDEs of the form
\eqref{nasaJNA}.  Such equations also include as a special case equations of
the form $\frac{d}{dt}u={\mathbf L}u+f$ and $\frac{d}{dt}u={\mathbf
L}\lozenge u+f$, where $L$ is a strictly elliptic second order
partial differential operator. These equations describe the heat conduction in
random media (inhomogeneous and anisotropic materials), where the
properties of the material are modeled by a positively definite
stochastic matrix.

Other special cases of \eqref{nasaJNA} include the heat equation
with random potential $\frac{d}{dt}u=\Delta u + {\mathbf B}\lozenge
u$, the Schr\"odinger  equation $(i\hbar)\frac{d}{dt}u=\Delta u + {\mathbf B}\lozenge
u+f$,
 the transport equation $\frac{d}{dt}u=\frac{d^{2}}{dx^2}u +
W\lozenge\frac{d}{dx}u$ driven by white noise as in \cite{Proske},
the generalized Langevin equation $ \frac{d}{dt}u= {\mathbf
J}u+{\mathbf C}(Y'),$ where $Y$ is a L\'evy process, ${\mathbf J}$
the infinitesimal generator of a $C_0-$semigroup and ${\mathbf C}$ a
bounded operator, which was studied in \cite{Dejv}, as well as the
equation $\frac{d}{dt} u= {\mathbf L} u + W \lozenge u,$ where
${\mathbf L}$ is a strictly elliptic partial differential operator
as studied in \cite{Cat2} and \cite{yuhu}.

Equations of the form $\frac{d}{dt}u= {\mathbf A} u+ {\mathbf B}W$
were also studied in
 \cite{Irina} and
\cite{Irina2}, where ${\mathbf A}$ is not necessarily generating a
$C_0-$semigroup, but an $r$-integrated or a convolution semigroup.

In order to solve \eqref{nasaJNA} we apply the method of
Wiener-It\^o chaos expansions, also known as the propagator method.
With this method we reduce the SPDE to an infinite triangular system
of PDEs, which can be solved by induction. Summing up all
coefficients of the expansion and proving convergence in an
appropriate weight space, one obtains the solution of the initial
SPDE.

We also consider the case of stationary equations $\mathbf A U+\mathbf B\lozenge U+F=0$.  In particular, elliptic SPDEs have been studied in \cite{JSAA}, \cite{Boris2}, \cite{ps} and \cite{DP2}. With the method of chaos expansions one can also treat hyperbolic SPDEs \cite{grci} and SPDEs with singularities \cite{FundSol}. One of its advantages is that it provides explicit solutions in terms of a series expansion, which can be easily implemented also to numerical approximations and computational simulations.

\subsection{$C_0-$semigroups}\label{0.1}

We recall some well-known facts which will be used in the sequel
(see \cite{paz}). Let $X$ be a Banach space.
If $B$ is a bounded linear operator on $X$ and $A$ is the
infinitesimal generator of a  $C_0-$semigroup $\{T_t\}_{ t\geq 0}$
satisfying
            $||T_t||_{L(X)}\leq M e^{w t},\;t\geq 0, \mbox{ for some }\;M, w>0,$
then $A+B$ is the infinitesimal generator of a $C_0-$semigroup
$\{S_t\}_{t\geq 0},$ on $X$ satisfying $$\|S_t\|_{L(X)}\leq M
e^{(w+M\|B\|_{L(X)})t},\;t\geq 0.$$

Let $u(0)=u^0\in D=Dom(A)$  and $f\in C([0,\infty),X)$. Recall that
$u:[0,T]\to X$ is a \emph{(classical) solution} on $[0,T]$ to
\begin{equation}\label{pert cauchy pr}\frac{d}{dt}u(t)=Au(t)+f(t),\; t\in(0,T],\quad u(0)= u^0,\end{equation} if $u$ is continuous
on $[0,T]$, continuously differentiable on $(0,T]$, $u(t)\in
D,\;t\in (0,T]$ and the equation is satisfied on $(0,T]$. If $f\in
L^1((0,T), X)$, then
$u(t)=T_t u^0+\int_0^t T_{t-s}f(s)ds, t\in [0,T]$ belongs to
$C([0,T],X)$, and it is called a \emph{mild solution.} Clearly, a
mild solution that is continuously differentiable on $(0,T]$ is a
classical solution.

Let $f\in L^1((0,T),X)\cap C((0,T],X)$ and $v(t)=\int_{0}^t
T_{t-s}f(s)ds, t\in[0,T].$ The initial value problem  has a solution
$u$ for every $u^0\in D$ if one of the following conditions is
satisfied (see \cite{paz}):
            \begin{itemize}

            \item[(i)] $v$ is continuously differentiable on $(0,T)$.

            \item[(ii)] $v(t)\in D$ for $0<t\leq T$ and $Av(t)$ is continuous on $(0,T]$.

            \end{itemize}
If the initial value problem has a solution on $[0,T]$ for some
$u^0\in D$, then $v(t)$ satisfies both (i) and (ii). Note that if $f\in C^1([0,T],X)$ then conditions (i) and (ii) are fulfilled. Moreover, if $f\in C^1([0,T],X)$ and $u^0\in D(A)$, then for the solution $u$ of (\ref{pert cauchy pr}) we have that $u\in C^1([0,T],X)$ and $\frac{d}{dt}u(0)=Au^0+f(0)$.


\subsection{Generalized stochastic processes}\label{0.2}

Denote by $(\Omega, \mathcal{F}, P) $ the Gaussian white noise
probability space $(S'(\mathbb{R}), \mathcal{B}, \mu), $ where
$S'(\mathbb{R})$ denotes the space of tempered distributions,
$\mathcal{B}$ the Borel sigma-algebra generated by the weak topology
on $S'(\mathbb{R})$ and $\mu$ the Gaussian white noise measure corresponding to  the
  characteristic function
\begin{equation*}\label{BM theorem}
\int_{S'(\mathbb{R})} \,  e^{{i\langle\omega, \phi\rangle}}
d\mu(\omega) = \exp \left [-\frac{1}{2} \|\phi\|^2_{L^2(\mathbb{R})}\right], \quad \quad\phi\in  S(\mathbb{R}),
\end{equation*}
given by the Bochner-Minlos theorem.

We recall the notions related to $L^2(\Omega,\mu)$ (see \cite{HOUZ})
where $\Omega=S'(\R)$ and $\mu$ is Gaussian white noise measure.
Define the set of multi-indices $\mathcal I$ to be $(\mathbb N_0^\mathbb N)_c$,
i.e. the set of sequences of non-negative integers which have only
finitely many nonzero components. Especially, we denote by $\mathbf 0=(0,0,0,\ldots)$ the multi-index with all entries equal to zero. The length of a multi-index is $|\alpha|=\sum_{i=1}^\infty\alpha_i$ for $\alpha=(\alpha_1,\alpha_2,\ldots)\in\mathcal I$, and it is always finite. Similarly, $\alpha!=\prod_{i=1}^\infty\alpha_i!$, and all other operations are also carried out componentwise. We will use the convention that $\alpha-\beta$ is defined if $\alpha_n-\beta_n\geq0$ for all $n\in\mathbb N$, i.e., if $\alpha-\beta\geq\mathbf 0$, and leave $\alpha-\beta$ undefined if $\alpha_n<\beta_n$ for some $n\in\mathbb N$.

The
Wiener-It\^o theorem (sometimes also referred to as the
Cameron-Martin theorem) states that one can define an orthogonal
basis $\{H_\alpha\}_{\alpha\in\mathcal I}$ of $L^2(\Omega,\mu)$,
where $H_\alpha$ are constructed by  means of Hermite orthogonal
polynomials $h_n$ and Hermite functions $\xi_n$,
$$H_\alpha(\omega)=\prod_{n=1} ^\infty h_{\alpha_n}(\langle\omega,\xi_n\rangle),\quad \alpha=(\alpha_1,\alpha_2,\ldots, \alpha_n\ldots)\in\mathcal I,\quad \omega\in\Omega=S'(\mathbb{R}).$$
Then, every $F\in L^2(\Omega,\mu)$ can be represented via the so
called \emph{chaos expansion}
$$F(\omega)=\sum_{\alpha\in\mathcal I} f_\alpha H_\alpha(\omega), \quad \omega\in S'(\mathbb{R}),\quad\sum_{\alpha\in\mathcal I} |f_\alpha|^2\alpha!<\infty,\quad f_\alpha\in\R,\quad\alpha\in\mathcal I.$$

Denote by $\varepsilon_k=(0,0,\ldots, 1, 0,0,\ldots),\;k\in \N$ the
multi-index with the entry 1 at the $k$th place. Denote by $\mathcal
H_1$ the subspace of $L^2(\Omega,\mu)$, spanned by the polynomials
$H_{\varepsilon_k}(\cdot)$, $k\in\mathbb N$. The subspace $\mathcal
H_1$ contains Gaussian stochastic processes, e.g. Brownian motion is
given by the chaos expansion $B(t,\omega) = \sum_{k=1}^\infty
\int_0^t \xi_k(s)ds\;H_{\varepsilon_k}(\omega).$

Denote by $\mathcal H_m$ the $m$th order chaos space, i.e. the
closure of the linear subspace spanned by the orthogonal polynomials
$H_\alpha(\cdot)$ with $|\alpha|=m$, $m\in\mathbb N_0$. Then the
Wiener-It\^o chaos expansion states that
$L^2(\Omega,\mu)=\bigoplus_{m=0}^\infty \mathcal H_m$, where
$\mathcal H_0$ is the set of constants in $L^2(\Omega,\mu)$.


It is well-known that the time-derivative of Brownian motion (white
noise process) does not exist in the classical sense. However,
changing the topology on $L^2(\Omega,\mu)$ to a weaker one, T. Hida
\cite{Hida} defined spaces of generalized random variables
containing the white noise as a weak derivative of the Brownian
motion. We refer to \cite{Hida}, \cite{HOUZ} for white noise
analysis (as an infinite dimensional analogue of the Schwartz theory
of deterministic generalized functions).

Let $(2\mathbb N)^{\alpha}=\prod_{n=1}^\infty (2n)^{\alpha_n},\quad
\alpha=(\alpha_1,\alpha_2,\ldots, \alpha_n,\ldots)\in\mathcal I.$ We
will often use the fact that the series $\sum_{\alpha\in\mathcal
I}(2\mathbb N)^{-p\alpha}$ converges  for $p>1$. Define the Banach
spaces
$$(S)_{1,p} =\{F=\sum_{\alpha\in\mathcal I}f_\alpha {H_\alpha}\in L^2(\Omega,\mu):\;  \|F\|^2_{(S)_{1,p}}= \sum_{\alpha\in\mathcal I}(\alpha!)^2 |f_\alpha|^2(2\mathbb N)^{p\alpha}<\infty\},\quad p\in\mathbb N_0.$$
Their topological dual spaces are given by
$$(S)_{-1,-p} =\{F=\sum_{\alpha\in\mathcal I}f_\alpha {H_\alpha}:\;  \|F\|^2_{(S)_{-1,-p}}= \sum_{\alpha\in\mathcal I}|f_\alpha|^2(2\mathbb N)^{-p\alpha}<\infty\},\quad p\in\mathbb N_0.$$
The Kondratiev space of generalized random variables is $(S)_{-1}
=\bigcup_{p\in\mathbb N_0}(S)_{-1,-p}$ endowed with the inductive
topology. It is the strong dual of $(S)_{1} =\bigcap_{p\in\mathbb
N_0}(S)_{1,p}$, called the Kondratiev space of test random variables
which is endowed with the projective topology.
Thus,
$$(S)_{1} \subseteq L^2(\Omega,\mu) \subseteq
(S)_{-1}$$ forms a Gelfand triplet.

The time-derivative of the Brownian motion exists in the generalized
sense and belongs to the Kondratiev space $(S)_{-1,-p}$ for
$p\geq\frac5{12}$. We refer to it as to \emph{white noise}
and its formal expansion is given by
 $W(t,\omega) =
\sum_{k=1}^\infty \xi_k(t)H_{\varepsilon_k}(\omega).$

We extended in \cite{GRPW} the definition of stochastic processes
also to processes of the chaos expansion form
$U(t,\omega)=\sum_{\alpha\in\mathcal I}u_\alpha(t)
{H_\alpha}(\omega)$, where the coefficients $u_\alpha$ are elements
of some Banach space $X$. We say that $U$ is an \emph{$X$-valued generalized stochastic process}, i.e. $U(t,\omega)\in X\otimes (S)_{-1}$ if there exists $p>0$ such that $\|U\|_{X\otimes(S)_{-1,-p}}^2=\sum_{\alpha\in\mathcal I}\|u_\alpha\|_X^2(2\mathbb N)^{-p\alpha}<\infty$.

 The \emph{Wick product} of stochastic
processes $F=\sum_{\alpha\in\mathcal I}f_\alpha H_\alpha,
G=\sum_{\beta\in\mathcal I}g_\beta H_\beta\in X\otimes(S)_{-1}$
 is  $$F\lozenge G = \sum_{\gamma\in\mathcal
I}\sum_{\alpha+\beta=\gamma}f_\alpha g_\beta H_\gamma =
\sum_{\alpha\in\mathcal I}\sum_{\beta\leq \alpha} f_\beta
g_{\alpha-\beta} H_\alpha,$$ and the $n$th Wick power is defined by
$F^{\lozenge n}=F^{\lozenge (n-1)}\lozenge F$, $F^{\lozenge 0}=1$.
 Note that
$H_{n\varepsilon_k}=H_{\varepsilon_k}^{\lozenge n}$ for $n\in\mathbb
N_0$, $k\in\mathbb N$.

For example, let $X=C^k[0,T]$, $k\in\mathbb N$. In \cite{ps} we
proved that differentiation of a stochastic process can be carried
out componentwise in the chaos expansion, i.e. due to the fact that
$(S)_{-1}$ is a nuclear space it holds that
$C^k([0,T],(S)_{-1})=C^k[0,T]\otimes(S)_{-1}$. This means that a
stochastic process $U(t,\omega)$ is $k$ times continuously
differentiable if and only if all of its coefficients $u_\alpha(t)$,
$\alpha\in\mathcal I$ are in $C^k[0,T]$.

The same holds for Banach space valued stochastic processes i.e.
elements of  $C^k([0,T],X)\otimes(S)_{-1}$, where $X$ is an
arbitrary Banach space. By the nuclearity of $(S)_{-1}$, these
processes can be regarded as elements of the tensor product space
$$C^k([0,T],X\otimes
(S)_{-1})=C^k([0,T],X)\otimes(S)_{-1}=\bigcup_{p=0}^{\infty}C^k([0,T],X)\otimes
(S)_{-1,-p}.$$

\section{Stochastic operators}\label{0.3}

\begin{definition} Let $X$ be a Banach space and $\mathbf O: X\otimes(S)_{-1}\rightarrow
X\otimes(S)_{-1}$ an operator acting on the space of stochastic
processes. We will call $\mathbf O$ to be a \emph{coordinatewise operator} if there
exists a family of operators $o_\alpha:X\rightarrow X$,
$\alpha\in\mathcal I$, such that $\mathbf O(\sum_{\alpha\in\mathcal
I}f_\alpha H_\alpha)=\sum_{\alpha\in\mathcal
I}o_\alpha(f_\alpha)H_\alpha$ for all $F=\sum_{\alpha\in\mathcal
I}f_\alpha H_\alpha\in X\otimes(S)_{-1}$.
\end{definition}
Clearly, not all operators are coordinatewise, for example $\mathbf
O(F)=F^{\lozenge2}$ can not be written in this form.
\begin{definition}
The subclass of  \emph{simple coordinatewise operators} consists of
 operators for which $o_\alpha=o_\beta=o$,
$\alpha,\beta\in\mathcal I$, that is, they can be written in form of
$\mathbf O(\sum_{\alpha\in\mathcal I}f_\alpha
H_\alpha)=\sum_{\alpha\in\mathcal I}o(f_\alpha)H_\alpha$ for some
operator $o:X\rightarrow X$.
\end{definition}
 For example, the operator of differentiation \cite{ps} and
the Fourier transform \cite{FundSol}  are simple coordinatewise operators.
The Ornstein-Uhlenbeck operator is a coordinatewise operator  but it is not a simple coordinatewise operator.

Note that even if all $o_\alpha$, $\alpha\in\mathcal I$,
are bounded linear operators, the coordinatewise operator $\mathbf O$ itself does
not need to be bounded. If $o_\alpha$, $\alpha\in\mathcal I$, are
uniformly bounded by some $C>0$, then $\mathbf O$ is also a bounded operator. This
follows from
\begin{equation*}\begin{split}\|\mathbf O(F)\|^2_{X\otimes(S)_{-1,-p}}&\leq\sum_{\alpha\in\mathcal
I}\|o_\alpha\|^2_{L(X)}\|f_\alpha\|_X^2(2\mathbb N)^{-p\alpha}\\
&\leq
C^2\sum_{\alpha\in\mathcal I}\|f_\alpha\|_X^2(2\mathbb
N)^{-p\alpha}=C^2\|F\|^2_{X\otimes (S)_{-1,-p}}<\infty,\end{split}\end{equation*} for $F\in X\otimes (S)_{-1,-p}$.

This condition is sufficient, but not necessary, and can be loosened
by the embedding $(S)_{-1,-p}\subseteq (S)_{-1,-q}$, $q\geq p$.

\begin{lemma}
Let $\mathbf O$ be a coordinatewise operator for which all
$o_\alpha$, $\alpha\in\mathcal I$, are
polynomially bounded i.e. $\|o_\alpha\|_{L(X)}\leq R(2\mathbb
N)^{r\alpha}$ for some $r,R>0$. Then, there exists $q\geq p$ such that $\mathbf O: X\otimes (S)_{-1,-p}\rightarrow X\otimes (S)_{-1,-q}$ is bounded.
\end{lemma}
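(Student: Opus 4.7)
The plan is to carry out a direct weighted $\ell^2$-estimate and to see how much we have to enlarge $p$ in order to absorb the polynomial factor $(2\mathbb N)^{r\alpha}$ coming from the assumption on $o_\alpha$. Given $F=\sum_{\alpha\in\mathcal I}f_\alpha H_\alpha\in X\otimes(S)_{-1,-p}$, I would start from the coordinatewise form $\mathbf O(F)=\sum_{\alpha\in\mathcal I}o_\alpha(f_\alpha)H_\alpha$ and write
$$\|\mathbf O(F)\|^2_{X\otimes(S)_{-1,-q}}=\sum_{\alpha\in\mathcal I}\|o_\alpha(f_\alpha)\|_X^2(2\mathbb N)^{-q\alpha}\leq\sum_{\alpha\in\mathcal I}\|o_\alpha\|_{L(X)}^2\,\|f_\alpha\|_X^2(2\mathbb N)^{-q\alpha}.$$

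Next I would insert the polynomial bound $\|o_\alpha\|_{L(X)}\leq R(2\mathbb N)^{r\alpha}$ to obtain
$$\|\mathbf O(F)\|^2_{X\otimes(S)_{-1,-q}}\leq R^2\sum_{\alpha\in\mathcal I}\|f_\alpha\|_X^2(2\mathbb N)^{(2r-q)\alpha}.$$
To compare this with $\|F\|^2_{X\otimes(S)_{-1,-p}}=\sum_{\alpha\in\mathcal I}\|f_\alpha\|_X^2(2\mathbb N)^{-p\alpha}$, it suffices that $(2\mathbb N)^{(2r-q+p)\alpha}\leq 1$ for every $\alpha\in\mathcal I$. Since $(2\mathbb N)^{\alpha}\geq 1$ for all multi-indices (with the single base case $(2\mathbb N)^{\mathbf 0}=1$), this inequality holds as soon as the exponent is nonpositive, i.e. $q\geq p+2r$. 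Hence I would simply choose any integer $q\geq p+2r$ (which in particular satisfies $q\geq p$), and conclude
$$\|\mathbf O(F)\|_{X\otimes(S)_{-1,-q}}\leq R\,\|F\|_{X\otimes(S)_{-1,-p}},$$
so $\mathbf O:X\otimes(S)_{-1,-p}\to X\otimes(S)_{-1,-q}$ is bounded with norm at most $R$.

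There is no real obstacle here; the only thing to be mindful of is that $p,q$ are required to be nonnegative integers by the very definition of the Kondratiev spaces $(S)_{-1,-p}$, so at the final step one should take $q=\lceil p+2r\rceil$ (equivalently, the smallest integer $\geq p+2r$) rather than the real number $p+2r$ itself. Everything else is a direct term-by-term estimate that relies only on the definition of a coordinatewise operator and the weight monotonicity $(2\mathbb N)^{-q\alpha}\leq (2\mathbb N)^{-p\alpha}$ when $q\geq p$, which in turn underlies the inclusion $(S)_{-1,-p}\subseteq(S)_{-1,-q}$ alluded to in the paragraph preceding the lemma.
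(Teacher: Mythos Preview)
Your proof is correct and follows essentially the same route as the paper: insert the bound $\|o_\alpha\|_{L(X)}\leq R(2\mathbb N)^{r\alpha}$, choose $q\geq p+2r$, and compare weights term by term to obtain $\|\mathbf O(F)\|_{X\otimes(S)_{-1,-q}}\leq R\,\|F\|_{X\otimes(S)_{-1,-p}}$. Your extra remark about rounding $q$ up to an integer is a harmless refinement that the paper does not bother to make.
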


\begin{proof}
Let $q\geq p+2r$. Then,

\begin{align*}\|\mathbf O(F)\|^2_{X\otimes(S)_{-1,-q}}&\leq R^2\sum_{\alpha\in\mathcal
I}(2\mathbb N)^{2r\alpha}\|f_\alpha\|_X^2(2\mathbb N)^{-q\alpha} =R^2
\sum_{\alpha\in\mathcal I}\|f_\alpha\|_X^2(2\mathbb
N)^{-(q-2r)\alpha}\\ &\leq R^2
\sum_{\alpha\in\mathcal I}\|f_\alpha\|_X^2(2\mathbb
N)^{-p\alpha} =  R^2 \|F\|^2_{X\otimes(S)_{-1,-p}}<\infty.\end{align*} Thus,
$\|\mathbf O\|_{L(X)\otimes(S)_{-1}}\leq R$.
\end{proof}

Note that the condition $\|o_\alpha\|_{L(X)}\leq R(2\mathbb
N)^{r\alpha}$ for some $r,R>0$ is actually
equivalent to stating that there exists $r>0$ such that
$\sum_{\alpha\in\mathcal I}\|o_\alpha\|^2_{L(X)}(2\mathbb
N)^{-r\alpha}<\infty$.

\vspace{1cm}

Throughout the paper we will consider the equation
\begin{equation}\begin{split}\frac{d}{dt}U(t,\omega) &= {\mathbf A}U(t,\omega) + {\mathbf B}\lozenge U(t,\omega) +
F(t,\omega),\quad t\in(0,T],\omega\in\Omega,\\U(0,\omega)&=U^0(\omega),\end{split}\label{nasaJNA}\end{equation} where both operators $\mathbf A$ and $\mathbf B$ are
assumed to be coordinatewise operators, i.e. composed out of a family of
operators $\{A_\alpha\}_{\alpha\in\mathcal I}$,
$\{B_\alpha\}_{\alpha\in\mathcal I}$, respectively. The operators
$A_\alpha$, $\alpha\in\mathcal I$, are assumed to be infinitesimal
generators of $C_0-$semigroups with a common domain $D$ dense in $X$
and the action of  $\mathbf A$ is given by $\mathbf
A(U)=\sum_{\alpha\in\mathcal I}A_\alpha(u_\alpha)H_\alpha$, for
$U=\sum_{\alpha\in\mathcal I}u_\alpha H_\alpha\in Dom(\mathbf
A)\subseteq D\otimes (S)_{-1}$, where
$$Dom(\mathbf A)=\{U=\sum_{\alpha\in\mathcal I}u_\alpha H_\alpha\in
D\otimes (S)_{-1}:\;\exists p_U>0,\; \sum_{\alpha\in\mathcal
I}\|A_\alpha(u_\alpha)\|^2_X(2\mathbb N)^{-p_U\alpha}<\infty\}.$$ The
operators $B_\alpha$, $\alpha\in\mathcal I$, are assumed to be
bounded and linear  on $X$, and the action of the operator
$\mathbf B\lozenge:X\otimes(S)_{-1}\to X\otimes(S)_{-1}$ is defined
by
$$\mathbf B\lozenge (U)=\sum_{\alpha\in \mathcal{I}}\sum_{\beta\leq
\alpha}B_\beta (u_{\alpha-\beta}) H_\alpha = \sum_{\gamma\in
\mathcal{I}}\sum_{\alpha+\beta=\gamma} B_\alpha(u_\beta) H_\gamma.$$

In the next two lemmas we provide two sufficient conditions that ensure the operator $\mathbf B\lozenge$ to be well-defined. Both conditions are actually equivalent to the fact that $B_\alpha$, $\alpha\in\mathcal I$, are polynomially bounded, but they provide finer estimates on the stochastic order (Kondratiev weight) of the domain and codomain of $\mathbf B\lozenge$.

\begin{lemma}
If the operators $B_\alpha$, $\alpha\in\mathcal I$,  satisfy $\sum_{\alpha\in\mathcal
I}\|B_\alpha\|^2_{L(X)}(2\mathbb N)^{-r\alpha}<\infty$, then
$\mathbf B\lozenge$ is well-defined as a mapping $\mathbf B\lozenge: X\otimes(S)_{-1,-p}\rightarrow X\otimes(S)_{-1,-(p+r+m)}$, $m>1$.
\end{lemma}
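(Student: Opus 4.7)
The plan is to show boundedness of $\mathbf B \lozenge$ between the stated Kondratiev spaces by a direct norm estimate: starting from $\mathbf B\lozenge U = \sum_\gamma c_\gamma H_\gamma$ with $c_\gamma = \sum_{\alpha+\beta=\gamma} B_\alpha(u_\beta)$, one bounds $\|c_\gamma\|_X$ by the triangle inequality and then applies a weighted Cauchy--Schwarz that separates the $B$-part (absorbed into the hypothesis) from the $u$-part (absorbed into $\|U\|_{X\otimes (S)_{-1,-p}}$), paying the cost of the convolution with the extra exponent $m$.

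Concretely, first I would write
$$\|c_\gamma\|_X \leq \sum_{\alpha+\beta=\gamma}\|B_\alpha\|_{L(X)}\,\|u_\beta\|_X = \sum_{\alpha+\beta=\gamma}\bigl(\|B_\alpha\|_{L(X)}(2\mathbb N)^{-r\alpha/2}\bigr)\bigl(\|u_\beta\|_X(2\mathbb N)^{r\alpha/2}\bigr),$$
and apply Cauchy--Schwarz in $(\alpha,\beta)$ with $\alpha+\beta=\gamma$. Setting $M:=\sum_{\alpha\in\mathcal I}\|B_\alpha\|_{L(X)}^2(2\mathbb N)^{-r\alpha}<\infty$ (finite by assumption), the first factor is bounded by $M$ uniformly in $\gamma$, giving
$$\|c_\gamma\|_X^2 \leq M\sum_{\alpha+\beta=\gamma}\|u_\beta\|_X^2\,(2\mathbb N)^{r\alpha}.$$

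Next I would multiply by $(2\mathbb N)^{-(p+r+m)\gamma}$, sum over $\gamma$, and swap the order of summation. Using $(2\mathbb N)^{-(p+r+m)\gamma} = (2\mathbb N)^{-(p+r+m)\alpha}(2\mathbb N)^{-(p+r+m)\beta}$ when $\gamma=\alpha+\beta$, the exponent on $\alpha$ collapses to $-(p+m)\alpha$ after combining with the $(2\mathbb N)^{r\alpha}$ weight, and on $\beta$ one uses the trivial bound $(2\mathbb N)^{-(p+r+m)\beta}\leq (2\mathbb N)^{-p\beta}$ (since $r,m>0$ and $\beta\geq \mathbf 0$). This yields
$$\|\mathbf B\lozenge U\|^2_{X\otimes(S)_{-1,-(p+r+m)}} \leq M\,\Bigl(\sum_{\alpha\in\mathcal I}(2\mathbb N)^{-(p+m)\alpha}\Bigr)\,\|U\|^2_{X\otimes(S)_{-1,-p}}.$$
Since $p\geq 0$ and $m>1$ ensure $p+m>1$, the remaining sum converges by the standard fact recalled earlier in the paper, and boundedness follows.

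There is no genuine obstacle here; the only subtle point is the choice of how to distribute the weights in Cauchy--Schwarz so that the $B$-sum appears with exactly the exponent $-r\alpha$ (to match the hypothesis) while the $u$-sum ends up with an exponent on $\beta$ that is no stronger than $-p\beta$ (to match the input norm). The factor $(2\mathbb N)^{-m\gamma}$ that the convolution costs is precisely what is paid by enlarging the codomain exponent from $p+r$ to $p+r+m$, and the condition $m>1$ is dictated solely by the convergence of $\sum_\alpha(2\mathbb N)^{-(p+m)\alpha}$.
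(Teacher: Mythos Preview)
Your proof is correct and follows essentially the same approach as the paper: triangle inequality on $c_\gamma$, then Cauchy--Schwarz on the finite convolution sum, then multiplicative splitting of the weight $(2\mathbb N)^{-(p+r+m)\gamma}$ over $\alpha$ and $\beta$. The only cosmetic difference is that you insert the weight $(2\mathbb N)^{\pm r\alpha/2}$ \emph{before} applying Cauchy--Schwarz, whereas the paper applies plain Cauchy--Schwarz first and distributes the weight $(2\mathbb N)^{-(p+r+m)\gamma}=(2\mathbb N)^{-m\gamma}(2\mathbb N)^{-r\gamma}(2\mathbb N)^{-p\gamma}$ afterwards (using $(2\mathbb N)^{-r\gamma}\leq(2\mathbb N)^{-r\alpha}$ and $(2\mathbb N)^{-p\gamma}\leq(2\mathbb N)^{-p\beta}$); the two are equivalent and yield the same conclusion.
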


\begin{proof}
For $U\in
X\otimes(S)_{-1,-p}$ and $q=p+r+m$ we have
\begin{equation*}\begin{split}\sum_{\gamma\in \mathcal{I}}\|
\sum_{\alpha+\beta=\gamma} B_\alpha(u_\beta) \|_X^2& (2\mathbb
N)^{-q\gamma}\leq\sum_{\gamma\in
\mathcal{I}}\Big[\sum_{\alpha+\beta=\gamma}
\|B_\alpha\|_{L(X)}\|u_\beta\|_X\Big]^2 (2\mathbb
N)^{-(p+r+m)\gamma}\\
&=\sum_{\gamma\in \mathcal{I}}(2\mathbb
N)^{-m\gamma}\left(\sum_{\alpha+\beta=\gamma}
\|B_\alpha\|_{L(X)}^2(2\mathbb N)^{-r\gamma}\right)\left(
\sum_{\alpha+\beta=\gamma}
\|u_\beta\|_X^2 (2\mathbb N)^{-p\gamma}\right)\\
&\leq  M\left(\sum_{\alpha\in \mathcal{I}}
\|B_\alpha\|_{L(X)}^2(2\mathbb N)^{-r\alpha}\right)\left(
\sum_{\beta\in \mathcal{I}} \|u_\beta\|_X^2 (2\mathbb
N)^{-p\beta}\right)<\infty,
\end{split}\end{equation*} where $M=\sum_{\gamma\in \mathcal{I}}(2\mathbb
N)^{-m\gamma}<\infty$, for $m>1$.
\end{proof}

\begin{lemma}
If the operators $B_\alpha$, $\alpha\in\mathcal I$,  satisfy  $\sum_{\alpha\in\mathcal
I}\|B_\alpha\|_{L(X)}(2\mathbb N)^{-\frac{r}{2}\alpha}<\infty$, for some $r>0$, then $\mathbf B\lozenge$ is well-defined as a mapping $\mathbf B\lozenge: X\otimes(S)_{-1,-r}\rightarrow X\otimes(S)_{-1,-r}$.
\end{lemma}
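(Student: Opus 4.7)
The plan is to bound $\|\mathbf B\lozenge U\|_{X\otimes(S)_{-1,-r}}^2$ directly by expanding the definition of the Wick-type action, using the triangle inequality in $X$ inside each coefficient, and then exploiting the multiplicative factorization $(2\mathbb N)^{-r\gamma}=(2\mathbb N)^{-\frac{r}{2}\alpha}(2\mathbb N)^{-\frac{r}{2}\beta}$ whenever $\alpha+\beta=\gamma$. This turns the required estimate into a control of an $\ell^2$-norm of a convolution on the index set $\mathcal I$, namely for the sequences $a_\alpha=\|B_\alpha\|_{L(X)}(2\mathbb N)^{-\frac{r}{2}\alpha}$ and $b_\beta=\|u_\beta\|_X(2\mathbb N)^{-\frac{r}{2}\beta}$. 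By hypothesis $A:=\sum_{\alpha\in\mathcal I}a_\alpha<\infty$, and by the assumption $U\in X\otimes(S)_{-1,-r}$ we have $\sum_{\beta\in\mathcal I}b_\beta^2=\|U\|_{X\otimes(S)_{-1,-r}}^2<\infty$, so we are in the classical Young-type situation $\ell^1*\ell^2\hookrightarrow\ell^2$.

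Concretely, after writing $(\mathbf B\lozenge U)_\gamma=\sum_{\alpha+\beta=\gamma}B_\alpha(u_\beta)$ and inserting the split weights inside the norm, I would apply the Cauchy--Schwarz inequality in the form
$$\Bigl(\sum_{\alpha+\beta=\gamma}a_\alpha b_\beta\Bigr)^2\leq\Bigl(\sum_{\alpha+\beta=\gamma}a_\alpha\Bigr)\Bigl(\sum_{\alpha+\beta=\gamma}a_\alpha b_\beta^2\Bigr),$$
bound the first factor by $A$, and then sum over $\gamma\in\mathcal I$, swapping the order of summation in the remaining double sum via Tonelli to get
$$\sum_{\gamma\in\mathcal I}\sum_{\alpha+\beta=\gamma}a_\alpha b_\beta^2=\sum_{\alpha\in\mathcal I}a_\alpha\sum_{\beta\in\mathcal I}b_\beta^2=A\,\|U\|_{X\otimes(S)_{-1,-r}}^2.$$
Combining these two bounds yields the linear estimate $\|\mathbf B\lozenge U\|_{X\otimes(S)_{-1,-r}}\leq A\,\|U\|_{X\otimes(S)_{-1,-r}}$, which shows both that the defining series for $\mathbf B\lozenge U$ converges in $X\otimes(S)_{-1,-r}$ and that $\mathbf B\lozenge$ is a bounded endomorphism of this space.

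I do not foresee a real obstacle: the only delicate point is the choice of how to split the Kondratiev weight between the coefficients of $\mathbf B$ and those of $U$ so that the codomain index matches the domain index (in contrast to the previous lemma, where an extra loss $r+m$ was paid). Splitting the weight symmetrically as $-\frac{r}{2}\alpha-\frac{r}{2}\beta$ is exactly what pairs the $\ell^1$-hypothesis on the $B_\alpha$'s with the $\ell^2$-information on the $u_\beta$'s, and the rest is a routine Cauchy--Schwarz and Fubini computation requiring no finiteness of $\sum_\alpha(2\mathbb N)^{-r\alpha}$ or any additional summability parameter $m>1$.
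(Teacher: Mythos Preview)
Your argument is correct and follows the same route as the paper: split the weight multiplicatively as $(2\mathbb N)^{-r\gamma}=(2\mathbb N)^{-\frac{r}{2}\alpha}(2\mathbb N)^{-\frac{r}{2}\beta}$ and then invoke the Young-type bound $\ell^1*\ell^2\hookrightarrow\ell^2$ on the resulting convolution, arriving at the identical estimate $\|\mathbf B\lozenge U\|_{X\otimes(S)_{-1,-r}}\leq A\,\|U\|_{X\otimes(S)_{-1,-r}}$ with $A=\sum_\alpha\|B_\alpha\|_{L(X)}(2\mathbb N)^{-\frac{r}{2}\alpha}$. The only cosmetic difference is that the paper names this step the ``generalized Minkowski inequality'' while you unpack it via Cauchy--Schwarz and Tonelli, which is simply one of the standard proofs of that inequality.
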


\begin{proof}
For $U\in
X\otimes(S)_{-1,-r}$, we have by the generalized Minkowski inequality that
\begin{eqnarray*}\sum_{\gamma\in \mathcal{I}}\|
\sum_{\alpha+\beta=\gamma} B_\alpha(u_\beta) \|_X^2 (2\mathbb
N)^{-r\gamma}&\leq&\sum_{\gamma\in
\mathcal{I}}\Big[\sum_{\alpha+\beta=\gamma}
\|B_\alpha\|_{L(X)}\|u_\beta\|_X\Big]^2 (2\mathbb
N)^{-r\gamma}\\
&\leq&\sum_{\gamma\in
\mathcal{I}}\Big[\sum_{\alpha+\beta=\gamma}
\|B_\alpha\|_{L(X)}(2\mathbb N)^{-\frac{r}{2}\alpha}\|u_\beta\|_X (2\mathbb N)^{-\frac{r}{2}\beta}\Big]^2\\
&\leq&\left(\sum_{\alpha\in
\mathcal{I}} \|B_\alpha\|_{L(X)}(2\mathbb N)^{-\frac{r}{2}\alpha}\right)^2\sum_{\beta\in
\mathcal{I}} \|u_\beta\|_X^2 (2\mathbb N)^{-r\beta}<\infty.
\end{eqnarray*}
\end{proof}

\subsection{Special cases and relationship to other works}\label{0.4}

Some of the most important operators of stochastic calculus are the
operators of the Malliavin calculus. We recall their
definitions in the generalized $\Svarcprim$ setting \cite{AADM}.

\begin{itemize}
\item[$\bullet $] The \emph{Malliavin derivative}, $\mathbb D$, as a stochastic gradient in the direction of white noise, is a linear and continuous mapping $\mathbb{D}:
X\otimes (S)_{-1} \rightarrow X\otimes \Svarcprim \otimes(S)_{-1}$
given by
$$
\mathbb{D} u=\sum_{\alpha \in \mathcal{I}}\sum_{k\in \mathbb{N}}\,
\alpha_k \, u_\alpha\, \otimes \, \xi_k\, \otimes
  {H}_{\alpha-\varepsilon_k},\quad\mbox{ for } u=\sum_{\alpha\in\mathcal
I}u_\alpha\otimes H_\alpha.
$$ In terms of quantum theory it corresponds to the annihilation operator
reducing the order of the chaos space ( $\mathbb D: \mathcal
H_m\rightarrow \mathcal H_{m-1}$).
\item[$\bullet $]  The \emph{Skorokhod integral}, $\delta$, as an extension of the It\^o integral to non-anticipating processes, is a linear and continuous mapping
$\delta: X\otimes \Svarcprim \otimes(S)_{-1}\rightarrow
X\otimes(S)_{-1}$  given by
$$ \delta(F )=\sum_{\alpha\in
\mathcal{I}}\sum_{k\in \mathbb{N}}  f_{\alpha}\otimes v_{\alpha,k}
\otimes H_{\alpha+\varepsilon_k},\quad\mbox{ for }
F=\sum_{\alpha\in\mathcal I} f_\alpha\otimes\left(\sum_{k\in
\mathbb{N}}v_{\alpha, k}\, \xi_k\right)\otimes H_\alpha.$$ It is the
adjoint operator of the Malliavin derivative and in terms of quantum
theory it corresponds to the creation operator increasing the order
of the chaos space ($\delta: \mathcal H_m\rightarrow \mathcal
H_{m+1}$).
\item[$\bullet $]  The \emph{Ornstein-Uhlenbeck operator}, $\mathcal R$, as the composition of the previous ones $\delta\circ\mathbb D$, is the stochastic analogue of the Laplacian.
It is a linear and continuous mapping $\mathcal
R:X\otimes(S)_{-1}\rightarrow X\otimes(S)_{-1}$ given by $$\mathcal
R(u) = \sum_{\alpha\in\mathcal I}|\alpha|u_\alpha\otimes
H_\alpha,\quad\mbox{ for } u=\sum_{\alpha\in\mathcal
I}u_\alpha\otimes H_\alpha.
$$ In terms of quantum theory it corresponds to the number operator.
It is a selfadjoint operator $\mathcal R:\mathcal H_m\rightarrow
\mathcal H_m$ with eigenvectors equal to the basis elements
$H_\alpha$, $\alpha\in\mathcal I$, i.e. $\mathcal
R(H_\alpha)=|\alpha|H_\alpha$, $\alpha\in\mathcal I$. Thus, Gaussian
processes with zero expectation are the only fixed points for the
Ornstein-Uhlenbeck operator.
\end{itemize}
Clearly, the Ornstein-Uhlenbeck operator is a coordinatewise operator,
while the Malliavin derivative and the Skorokhod integral are not coordinatewise operators.

The Ornstein-Uhlenbeck operator is the infinitesimal generator of
the semigroup $T_t=e^{t\mathcal R}$, $t\geq0$, given by
$T_t(u)=\sum_{\alpha\in\mathcal I}e^{-|\alpha|t}u_\alpha\otimes
H_\alpha$, for $u=\sum_{\alpha\in\mathcal I}u_\alpha\otimes
H_\alpha\in X\otimes(S)_{-1}$.

It is also closely connected to the Ornstein-Uhlenbeck process. The
Ornstein-Uhlenbeck process is the solution of the SDE
$du(t,\omega)=-u(t,\omega)dt+dB(t,\omega)$,
$u(0,\omega)=u_0(x,\omega)$, and it is given by
$u(t,\omega)=e^{-t}u_0(\omega)+\int_0^t e^{t-s}dB(s,\omega)$. It is
a Markov process with transition semigroup $\{T_t\}_{t\geq0}$
\cite{Bog}. The solution of the generalized heat equation
$\frac{d}{dt}u+\mathcal R(u)=0$, $u(0)=u_0$, is given by
$u=T_t(u_0)$, i.e. $u(t,x)=(T_tu_0)(x)$ and
$(T_t\varphi)(x)=E(\varphi(u(t,x))$ for any $\varphi\in C_b(\mathbb
R)$ and $u$ is the Ornstein-Uhlenbeck process.

\vspace{5mm}

Now we turn to our equation
\begin{equation}\frac{d}{dt}U(t,\omega) = {\mathbf A}U(t,\omega) + {\mathbf B}\lozenge U(t,\omega) +
F(t,\omega),\label{SPDE-1}
\end{equation}
where $\mathbf A$ and $\mathbf B$ are coordinatewise operators  as
described in Section \ref{0.3}, composed out of a family of
operators $\{A_\alpha\}_{\alpha\in\mathcal I}$,
$\{B_\alpha\}_{\alpha\in\mathcal I}$, respectively, where $A_\alpha$
are infinitesimal generators on $X$ and $B_\alpha$ are bounded
linear operators on $X$, both families being polynomially bounded,
and their actions given by
\begin{equation}\label{delovanje A} {\mathbf A}U=\sum_{\alpha\in\mathcal
I}A_\alpha(u_\alpha)H_\alpha,\qquad \mbox{ for} \; \; U=\sum_{\alpha\in\mathcal I}u_\alpha H_\alpha,\end{equation}
\begin{equation}\label{delovanje B} {\mathbf B}\lozenge U =\sum_{\alpha\in \mathcal{I}}\sum_{\beta\leq
\alpha}B_\beta (u_{\alpha-\beta}) H_\alpha,\qquad \mbox{ for}\;\; U=\sum_{\alpha\in\mathcal I}u_\alpha H_\alpha. \end{equation}  Some important
special cases include the following:

 \begin{enumerate}
\item[I)] Special cases for $\mathbf A$:
            \begin{enumerate}
            \item[1)] $\mathbf A$ is a simple coordinatewise operator, i.e. $A_\alpha=A, \alpha\in\mathcal I,$ where $A$ is the infinitesimal generator of a $C_0-$semigroup on  $X$.
            Such operators are, for example the Laplacian $\Delta$ on
            $X=W^{2,2}(\R^n)$ or any strictly elliptic linear partial differential operator  of even order
            $P(x,D)=\sum_{|\iota|\leq 2m}a_\iota(x)D^\iota$. For example, second order elliptic operators can be written in
            divergence form $L=\nabla\cdot(Q\nabla\cdot
            +b)+c\nabla\cdot$, where $Q$ is a positively definite function matrix. 
            \item[2)] $A_\alpha=A+R_\alpha, \alpha\in\mathcal I$, where $A$ is as in 1), while  $R_\alpha, \alpha\in \mathcal{I},$ are bounded linear operators on $X$
            so that $\mathbf R$
            is a coordinatewise operator
            $${\mathbf R}U(t,\omega)=\sum_{\alpha\in \mathcal{I}}R_\alpha
            u_\alpha(t)H_\alpha(\omega).$$

            Especially, if we take $A=0$ and $R_\alpha$ to be
            multiplication operators $R_\alpha(x)=r_\alpha\cdot x$,
            $x\in X$, then the resulting operator $\mathbf R$ is a
            self-adjoint operator with eigenvalues $r_\alpha$
            corresponding to the eigenvectors $H_\alpha$ and thus
            represents a natural generalization of the
            Ornstein-Uhlenbeck operator. For $r_\alpha=|\alpha|$, $\alpha\in\mathcal I$, we
            retrieve the Ornstein-Uhlenbeck operator $\mathcal R$.

            Finally, we note that every bounded linear coordinatewise operator
            $\mathbf R$ can be written in the form
            ${\mathbf R}u=\delta(\mathbf Mu)$ where $\mathbf M$ is a generalization
            of the Malliavin derivative. This will be done in
            Proposition \ref{Dora1}.


            \end{enumerate}

\item[II)]  Special cases for $\mathbf B$:
            \begin{enumerate}
            \item[1)] $\mathbf B$ is an operator acting as a multiplication operator with a deterministic
            function, i.e. $B_\alpha=b$ for $\alpha=(0,0,0,0,\ldots)$ and $B_\alpha=0$ for all other  $\alpha\in\mathcal I$. Its
            action is thus
            $$\mathbf B\lozenge U(t,\omega)=\sum_{\alpha\in \mathcal{I}}b\cdot u_\alpha(t)H_\alpha(\omega).$$
            For example, we may take $X=L^2(\mathbb R^n)$ and $b=b(x)$, $x\in\mathbb R^n$, for an essentially bounded function $b$.

            \item[2)] $\mathbf B$ is multiplication with spatial white
            noise on $X=L^2(\mathbb R^n)$. Let $B_k:= B_{\varepsilon_k} = \xi_k$, $k\in\N$, and
            $B_\alpha=0$ for $\alpha\neq\varepsilon_k$, i.e.
            $B_k(v(x))=\xi_k(x)\cdot v(x)$, $k\in\N$.
            Then, $$\mathbf B\lozenge U(t,\omega)  =
            W(x,\omega)\lozenge U(t,\omega)  .$$
            Clearly, \begin{eqnarray*}\mathbf B\lozenge U &=& \sum_{\gamma\in\mathcal
            I}\sum_{k\in\N} B_k(u_{\alpha-\varepsilon_k})H_\gamma = \sum_{\gamma\in\mathcal
            I}\sum_{k\in\N} u_{\alpha-\varepsilon_k}\xi_k H_\gamma\\ &
            =&
            \sum_{\gamma\in\mathcal
            I}\sum_{\alpha+\varepsilon_k=\gamma} u_{\alpha}\xi_k
            H_\gamma = W\lozenge U.\end{eqnarray*}
            Multiplication with spatial white noise is important for
            applications since it describes stationary
            perturbations.

 \item[3)]  $\mathbf B$ is of the form $B_{\varepsilon_k}=B_k$, $k\in\N$, and
            $B_\alpha=0$ for $\alpha\neq\varepsilon_k$, where
            $B_k:X\rightarrow X$, $k\in\N$, are bounded linear operators.

             Note that in this case there is a one-to-one correspondence between operators of
 the form $\mathbf B\lozenge$ and operators of the form $\delta(\mathbf Mu)$ where $\mathbf M$ is a simple coordinatewise operator. This will be done in
            Proposition \ref{Dora2}.

 \item[4)]  $\mathbf B$ is a simple coordinatewise operator, i.e. $B_\alpha=B, \alpha\in\mathcal I,$ where $B$ is a bounded linear operator on
 $X$. Alternatively, we may also regard operators as $B:X\to X'$ in
 order to make them bounded;
            such operators are for example the divergence $\nabla\cdot$
            as a mapping from
            $X=W^{1,2}(\R^n)$ to $X'=W^{-1,2}(\R^n)$.

   \item[5)] $\mathbf B\lozenge
   =\nabla\cdot\lozenge(Q\lozenge\nabla\cdot
            +b\lozenge)+c\lozenge\nabla\cdot$ as a strictly elliptic second order operator with random coefficients. This operator is obtained
            for $B_\alpha= \nabla\cdot(Q_\alpha\nabla\cdot
            +b_\alpha)+c_\alpha\nabla\cdot$, $\alpha\in\mathcal I$, and was studied in \cite{ps} and
            \cite{DP2}.

\end{enumerate}
\end{enumerate}

\begin{proposition}\label{Dora1}
Let $\mathbf R:X\otimes (S)_{-1}\rightarrow X\otimes(S)_{-1}$ be a
bounded linear coordinatewise operator defined by ${\mathbf
R}u(t,\omega)=\sum_{\alpha\in \mathcal{I}}R_\alpha
            u_\alpha(t)H_\alpha(\omega)$.
\begin{enumerate}
\item
            There exists an operator
            $\mathbf M:X\otimes (S)_{-1}\rightarrow
            X\otimes\Svarcprim\otimes(S)_{-1}$ of the form
            $$\mathbf Mu=\sum_{k=1}^\infty \mathbf M_ku\otimes\xi_k,\quad u\in X\otimes (S)_{-1},$$
            for some coordinatewise operators $\mathbf M_k:X\otimes (S)_{-1}\rightarrow
            X\otimes(S)_{-1}$, $k\in\N$, such that
            $${\mathbf R}u=\delta(\mathbf Mu)$$ holds.
\item
            Especially, if $\mathbf R$ is a selfadjoint operator,
            then $\mathbf M$ is a generalization of the Malliavin
            derivative.
\end{enumerate}
\end{proposition}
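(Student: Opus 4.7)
The plan is to reverse-engineer $\mathbf M$ from the shift-by-one structure of the Skorokhod integral. Writing $u=\sum_{\alpha\in\mathcal I}u_\alpha H_\alpha$ so that $\mathbf R u = \sum_{\gamma\in\mathcal I} R_\gamma u_\gamma H_\gamma$, and seeking $\mathbf M u = \sum_{k\in\mathbb N} \mathbf M_k u\otimes \xi_k$ with $\mathbf M_k u = \sum_\beta(\mathbf M_k u)_\beta H_\beta$, the definition of $\delta$ gives
$$\delta(\mathbf M u) \;=\; \sum_{k\in\mathbb N}\sum_\beta (\mathbf M_k u)_\beta\, H_{\beta+\varepsilon_k} \;=\; \sum_{\gamma\in\mathcal I}\Bigl(\sum_{k:\gamma_k\geq 1} (\mathbf M_k u)_{\gamma-\varepsilon_k}\Bigr)H_\gamma.$$
The identity $\mathbf R u = \delta(\mathbf M u)$ therefore reduces to the coefficient equations $R_\gamma u_\gamma = \sum_{k:\gamma_k\geq 1}(\mathbf M_k u)_{\gamma-\varepsilon_k}$ for every $\gamma\neq\mathbf 0$. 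The natural way to solve them is to let each $(\mathbf M_k u)_\beta$ depend on $u_{\beta+\varepsilon_k}$, which is exactly the shifted-diagonal pattern exhibited by the classical Malliavin derivative $(\mathbb D_k u)_\beta=(\beta_k+1)u_{\beta+\varepsilon_k}$.

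I would then fix, for every $\gamma\neq\mathbf 0$, a splitting $R_\gamma = \sum_{k:\gamma_k\geq 1}\tilde m_{k,\gamma}$ of the bounded operator $R_\gamma:X\to X$ into bounded pieces; the two simplest choices are the balanced one $\tilde m_{k,\gamma}=\frac{\gamma_k}{|\gamma|}R_\gamma$ (weights summing to $1$) and the concentrated one placing all the mass on $k^{*}(\gamma)=\min\{k:\gamma_k\geq 1\}$. With this in hand I define
$$\mathbf M_k u \;=\; \sum_{\beta\in\mathcal I}\tilde m_{k,\beta+\varepsilon_k}\bigl(u_{\beta+\varepsilon_k}\bigr)\,H_\beta,$$
which qualifies as a coordinatewise operator in the broader (index-shifted) sense that already covers the $\mathbb D_k$. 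A direct reindexing $\gamma=\beta+\varepsilon_k$ then yields $\delta(\mathbf M u)=\sum_\gamma R_\gamma u_\gamma H_\gamma=\mathbf R u$. Continuity of $\mathbf M$ between appropriate Kondratiev spaces follows from the polynomial boundedness of $\{R_\gamma\}_{\gamma\in\mathcal I}$ forced by the boundedness of $\mathbf R$, combined with weight estimates of exactly the type used in the lemmas of Section \ref{0.3}.

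For part~(2), when $\mathbf R$ is selfadjoint each $R_\gamma$ is selfadjoint on $X$, and with the balanced splitting $(\mathbf M_k u)_\beta=\tfrac{\beta_k+1}{|\beta+\varepsilon_k|}R_{\beta+\varepsilon_k}(u_{\beta+\varepsilon_k})$ has exactly the same structural form as $\mathbb D_k$, merely with the scalar $\beta_k+1$ replaced by the selfadjoint operator weight $\tfrac{\beta_k+1}{|\beta+\varepsilon_k|}R_{\beta+\varepsilon_k}$; in particular the Ornstein--Uhlenbeck case $R_\gamma=|\gamma|I$ recovers $\mathbf M=\mathbb D$ exactly. The main obstacle I anticipate is the $\gamma=\mathbf 0$ coefficient: since $\delta$ always produces an element with vanishing zero-chaos component, the construction forces $R_{\mathbf 0}=0$, so either the hypothesis implicitly assumes that $\mathbf R$ maps into the orthogonal complement of the constants (which is natural for a ``generalized Malliavin'' operator) or the $\mathbf 0$-term must be peeled off and absorbed into a separate remark; the remaining bookkeeping of continuity on the Kondratiev scales is routine.
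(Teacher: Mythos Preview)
Your approach is correct and essentially coincides with the paper's. Your ``balanced splitting'' $\tilde m_{k,\gamma}=\frac{\gamma_k}{|\gamma|}R_\gamma$ yields precisely the operator the paper writes down,
$$\mathbf M_k u=\sum_{\alpha\in\mathcal I}(\alpha_k+1)\,\frac{R_{\alpha+\varepsilon_k}(u_{\alpha+\varepsilon_k})}{|\alpha+\varepsilon_k|}\,H_\alpha,$$
the only difference being that the paper obtains it by quoting an external invertibility result for $\delta$ (from \cite{AADM}) while you solve the coefficient identity $R_\gamma u_\gamma=\sum_{k:\gamma_k\geq1}(\mathbf M_k u)_{\gamma-\varepsilon_k}$ by hand. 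For part~(2) the paper specializes to $R_\alpha=r_\alpha\,\mathrm{Id}$ and allows an \emph{arbitrary} scalar decomposition $r_\alpha=\sum_k r_{k,\alpha}$, of which your balanced choice $r_{k,\alpha}=\frac{\alpha_k}{|\alpha|}r_\alpha$ is the canonical instance recovering $\mathbb D$ when $r_\alpha=|\alpha|$.

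Two remarks. First, your observation that the construction forces $R_{\mathbf 0}u_{\mathbf 0}=0$ (since $\delta$ has no zero-chaos output) is a genuine caveat that the paper does not make explicit; it is implicitly absorbed into the cited invertibility result, which only inverts $\delta$ on the range of $\delta$. Second, you are right to flag that the resulting $\mathbf M_k$ are not coordinatewise in the strict sense of the paper's Definition (the $\beta$-coefficient depends on $u_{\beta+\varepsilon_k}$, not $u_\beta$); the paper's own proof has the same feature, so ``coordinatewise'' is being used loosely here in exactly the index-shifted sense you describe.
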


\begin{proof}
a)  In \cite{AADM} we proved that the Skorokhod integral is
invertible, i.e. there exists a unique solution to equations of the
form $\delta(v)=f$. Considering the equation $\delta(\mathbf
Mu)=\sum_{\alpha\in \mathcal{I}}R_\alpha
            u_\alpha\;H_\alpha$ and applying the result from
            \cite{AADM}, we obtain $\mathbf Mu$ in the form
$$\mathbf Mu=\sum_{\alpha\in\mathcal I}
\sum_{k\in\N}(\alpha_k+1)\frac{R_{\alpha+\varepsilon_k}(
u_{\alpha+\varepsilon_k})}{|\alpha+\varepsilon_k|}\otimes\xi_k\otimes
H_\alpha.$$ By defining $$\mathbf M_ku=\sum_{\alpha\in\mathcal I}
(\alpha_k+1)\frac{R_{\alpha+\varepsilon_k}(
u_{\alpha+\varepsilon_k})}{|\alpha+\varepsilon_k|}\otimes
H_\alpha,\quad k\in\N,$$ we obtain the assertion.

 b)                Let $\mathbf R$ be a self-adjoint operator with eigenvalues $r_\alpha$ and
                eigenvectors $H_\alpha$, $\alpha\in\mathcal I$, i.e., an operator of
                the form ${\mathbf R}u=\sum_{\alpha\in\mathcal I} r_\alpha u_\alpha
                H_\alpha$. Assume that $r_\alpha=\sum_{k\in\mathbb
                N}r_{k,\alpha}$ for some $r_{k,\alpha}\in \R$, $k\in\mathbb N$, $\alpha\in\mathcal
                I$, is an arbitrary decomposition of the value
                $r_\alpha$. 

                Define $$\mathbf M_k u=\sum_{\alpha\in\mathcal
                I}r_{k,\alpha}u_\alpha\otimes
                H_{\alpha-\varepsilon_k}.$$ Then $\mathbf Mu=\sum_{k\in\mathbb N}
                \mathbf M_ku\otimes \xi_k = \sum_{k\in\mathbb N}\sum_{\alpha\in\mathcal
                I}r_{k,\alpha}u_\alpha \otimes H_{\alpha-\varepsilon_k}\otimes\xi_k $ and
                $$\delta(\mathbf Mu)=\sum_{k\in\mathbb N}\sum_{\alpha\in\mathcal
                I}r_{k,\alpha}u_\alpha\otimes H_{\alpha}=\sum_{\alpha\in\mathcal I}r_\alpha
                u_\alpha\otimes H_\alpha.$$
\end{proof}

\begin{remark}
The converse is not true. Even if each $\mathbf M_k$, $k\in\N$, is a
simple coordinatewise operator (and so is $\mathbf M$), ${\mathbf R}:=\delta\circ
\mathbf M$ does not need to be a coordinatewise operator. This would
require that the system $R_\alpha(u_\alpha)=\sum_{k\in\N}
m_k(u_{\alpha-\varepsilon_k})$, $\alpha\in\mathcal I$, is solvable for $R_\alpha(\cdot)$
given the functions $m_k(\cdot)$, $k\in\N$, which is not true in general.
\end{remark}

\begin{proposition}\label{Dora2}
Let $\mathbf M:X\otimes (S)_{-1}\rightarrow
            X\otimes\Svarcprim\otimes(S)_{-1}$ be of the form
            \begin{equation}\mathbf Mu=\sum_{k=1}^\infty \mathbf M_ku\otimes\xi_k,\quad u\in X\otimes
            (S)_{-1},\label{LR}\end{equation}
            for some simple coordinatewise operators $\mathbf M_k:X\otimes (S)_{-1}\rightarrow
            X\otimes(S)_{-1}$, $k\in\N$.  Then, there exists a coordinatewise
            operator
            $\mathbf B$ such that
            $B_\alpha=0$ for $\alpha\neq\varepsilon_k$, $k\in\N$, and
            $$\delta(\mathbf Mu)={\mathbf B}\lozenge u$$ holds.

            Conversely, for any coordinatewise operator
            $\mathbf B$ such that
            $B_\alpha=0$ for $\alpha\neq\varepsilon_k$, $k\in\N$,
            there exists an operator $\mathbf M$ of the form $\mathbf Mu=\sum_{k=1}^\infty
            \mathbf M_ku\otimes\xi_k$ for some simple coordinatewise operators $\mathbf M_k$,
            $k\in\N$, such that $\delta(\mathbf Mu)={\mathbf B}\lozenge u$ holds.

\end{proposition}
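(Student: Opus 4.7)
My plan is to prove both directions by direct coefficient matching, using the explicit formula for the Skorokhod integral recalled in Section~\ref{0.4} together with the defining expression \eqref{delovanje B} for $\mathbf B\lozenge$. The key observation is that $\delta$ lifts a chaos of order $m$ to order $m+1$ by appending a single multi-index $\varepsilon_k$, which is exactly the combinatorial shift produced by a Wick multiplication whose nontrivial coefficients live only on the set $\{\varepsilon_k:k\in\mathbb N\}$.

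For the first implication, suppose each $\mathbf M_k$ is simple coordinatewise, i.e. $\mathbf M_k u=\sum_{\alpha\in\mathcal I}m_k(u_\alpha)H_\alpha$ for a fixed operator $m_k:X\to X$. Then $\mathbf Mu$ has chaos representation
\begin{equation*}
\mathbf Mu=\sum_{\alpha\in\mathcal I}\Bigl(\sum_{k=1}^\infty m_k(u_\alpha)\otimes\xi_k\Bigr)\otimes H_\alpha,
\end{equation*}
which fits the input form of $\delta$, so
\begin{equation*}
\delta(\mathbf Mu)=\sum_{\alpha\in\mathcal I}\sum_{k=1}^\infty m_k(u_\alpha)\,H_{\alpha+\varepsilon_k}.
\end{equation*}
Reindexing via $\gamma=\alpha+\varepsilon_k$ (valid exactly when $\gamma_k\geq 1$) and comparing with \eqref{delovanje B}, I see that the choice $B_{\varepsilon_k}:=m_k$ for every $k\in\mathbb N$ and $B_\alpha:=0$ for all other multi-indices yields a coordinatewise operator $\mathbf B$ with $\mathbf B\lozenge u=\delta(\mathbf Mu)$.

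For the converse, given $\mathbf B$ with $B_\alpha=0$ outside $\{\varepsilon_k\}_{k\in\mathbb N}$, I set $B_k:=B_{\varepsilon_k}$ and define $\mathbf M_k$ as the simple coordinatewise operator associated to $B_k$, namely $\mathbf M_k\bigl(\sum_\alpha u_\alpha H_\alpha\bigr):=\sum_\alpha B_k(u_\alpha)H_\alpha$. Then $\mathbf Mu:=\sum_{k=1}^\infty \mathbf M_k u\otimes\xi_k$ is of the prescribed form, and the coefficient computation above, read backwards, gives $\delta(\mathbf Mu)=\mathbf B\lozenge u$.

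The one point that requires care is the bookkeeping in the reindexing $\gamma=\alpha+\varepsilon_k$: I must verify that every term of $\delta(\mathbf Mu)$ in the basis $\{H_\gamma\}$ is hit exactly once on the Wick side, and vice versa. The restriction $\beta\leq\gamma$ from \eqref{delovanje B}, specialized to $\beta=\varepsilon_k$, is precisely $\gamma_k\geq 1$, so the two indexing schemes agree termwise. No further analytic work is required, since the hypotheses already in force in Section~\ref{0.3} ensure continuity of $\mathbf B\lozenge$ and of $\delta$ on the appropriate Kondratiev scales.
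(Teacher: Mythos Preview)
Your proof is correct and follows essentially the same approach as the paper: you compute $\delta(\mathbf Mu)$ explicitly using the definition of the Skorokhod integral, compute $\mathbf B\lozenge u$ from \eqref{delovanje B} under the restriction $B_\alpha=0$ for $\alpha\notin\{\varepsilon_k\}$, and match the two expansions termwise to obtain the correspondence $m_k=B_{\varepsilon_k}$. Your additional remark on the reindexing condition $\gamma_k\geq 1$ being equivalent to $\varepsilon_k\leq\gamma$ makes the coefficient match slightly more explicit than the paper's version, but the argument is otherwise identical.
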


\begin{proof}
Let $\mathbf M$ be an operator as stated above and since $\mathbf
M_k$ are simple coordinatewise operators, we can write them as
$$\mathbf M_k(u)=\sum_{\alpha\in\mathcal I} m_k(u_\alpha)H_\alpha,\qquad
u=\sum_{\alpha\in\mathcal I}u_\alpha H_\alpha,$$ for some operators
$m_k:X\rightarrow X$, $k\in\N$. Thus,
$$\mathbf Mu=\sum_{k=1}^\infty \sum_{\alpha\in\mathcal I}
m_k(u_\alpha)H_\alpha \otimes \xi_k$$ which further implies
\begin{equation}\delta(\mathbf Mu) = \sum_{k=1}^\infty \sum_{\alpha\in\mathcal I}
m_k(u_\alpha)H_{\alpha+\varepsilon_k}  = \sum_{k=1}^\infty
\sum_{\alpha\in\mathcal I}
m_k(u_{\alpha-\varepsilon_k})H_{\alpha}.\label{SP1}\end{equation}

On the other hand, if $\mathbf B$ is such that
            $B_\alpha=0$ for $\alpha\neq\varepsilon_k$, $k\in\N$,
            and we denote by $B_k:=B_{\varepsilon_k}$, $k\in\mathbb
            N$, the operators acting on $X$, then
\begin{equation}
\mathbf B \lozenge u = \sum_{\alpha\in\mathcal I}\sum_{k=1}^\infty
B_k(u_{\alpha-\varepsilon_k})H_{\alpha}.
 \label{SP2}
\end{equation}
From \eqref{SP1} and \eqref{SP2} it follows that $\delta(\mathbf Mu)
= \mathbf B \lozenge u$ if and only if $m_k=B_k$ for all $k\in\N$.
Thus, there is a one-to-one correspondence between the operators
$\mathbf B\lozenge$ and $\delta\circ \mathbf M$.
\end{proof}

\begin{remark}
In \cite{Boris} and \cite{Boris2} Rozovskii and Lototsky considered
the equation $\frac{d}{dt} = \mathbf A u + \delta(\mathbf Mu)+f$,
where $\mathbf M$ is of the form \eqref{LR}. They implicitly assumed
that all their operators $\mathbf A$ and $\mathbf M_k$, $k\in\N$,
belong to our class of simple coordinatewise operators. This corresponds to our special cases I-1) and
II-3).
\end{remark}

Some special cases of stochastic differential equations covered by
\eqref{SPDE-1} include the following:

\begin{itemize}
\item[$\bullet $]  The heat equation with random potential $$\frac{d}{dt}u=\Delta
u + {\mathbf B}\lozenge u.$$ In particular, if the random potential
is modeled by stationary perturbations, we may take spatial white
noise as a model and obtain $\frac{d}{dt}u=\Delta u + W\lozenge u$.
This corresponds to the special choice of operators I-1) and
II-2).

\item[$\bullet $]  The heat equation in random (inhomogeneous and anisotropic)
media, where the physical properties of the medium are modeled by a
stochastic matrix $Q$. This corresponds to the case I-1) with
$\mathbf A=0$ and II-5) leading to an equation of the form
$$\frac{d}{dt}u=\nabla\cdot\lozenge(Q\lozenge\nabla\cdot u
            +b\lozenge u)+c\lozenge\nabla\cdot u +f.$$

\item[$\bullet $]  Taking $\mathbf A=0$ and $B_k:=B_{\varepsilon_k}=\xi_k \nabla\cdot$, $k\in\N$, (see special cases II-2) and II-4)) we
obtain the transport equation driven by white noise
$$\frac{d}{dt}u=\Delta u + W\lozenge \nabla\cdot u.$$ 

\item[$\bullet $]
The Langevin equation $$\frac{d}{dt}u= -\lambda u+W(t),$$
$\lambda>0$, corresponding to the case I-1) with $A=-\lambda$,
$f=W$ and $\mathbf B=0$. Its solution is the Ornstein-Uhlenbeck
process describing the spatial position of a Brownian particle in a
fluid with viscosity $\lambda$.

In \cite{Dejv} the authors considered the generalized Langevin equation leading to generalized Ornstein-Uhlenbeck operators driven by L\'evy processes
$$ \frac{d}{dt}u= Ju+C(\frac{d}{dt}Y),$$ where $Y$ is a L\'evy process, $J$ the infinitesimal generator of a $C_0-$semigroup and $C$ a bounded operator. All processes are Hilbert space valued. This corresponds to our case with $X$ being this Hilbert space, $\mathbf A=J$, $\mathbf B=0$ and $f=C(Y')$.

\item[$\bullet $]  The equation $\frac{d}{dt} = \mathbf A u + \delta(\mathbf Mu)+f$, that
was extensively studied in \cite{Boris} and \cite{Boris2}. This
corresponds to our special cases I-1) and II-3).

\item[$\bullet $]
The equation $$\frac{d}{dt} u= L u + W \lozenge u,$$ where $L$ is a
strictly elliptic partial differential operator as studied in
\cite{Cat2} and \cite{yuhu}. This corresponds to the special case
I-1) and II-2).


\end{itemize}


\section{Stochastic evolution equations}\label{1.1}

Now we turn to the general case of stochastic Cauchy problems of the form $\frac{d}{dt}U(t,\omega) = {\mathbf A}U(t,\omega) + {\mathbf B}\lozenge U(t,\omega) +
F(t,\omega),$ $t\in(0,T]$, $\omega\in\Omega$, with initial value $U(0,\omega)=U^0(\omega)$, $\omega\in\Omega$, and all processes are $X$-valued for a Banach space $X$.

\begin{definition}\label{U resaca CP}
It is said that $U$ is a solution to (\ref{nasaJNA}) if $U\in C([0,T], X)\otimes (S)_{-1}\cap C^1((0,T], X)\otimes (S)_{-1}$ and $U$ satisfies (\ref{nasaJNA}).
\end{definition}

\begin{theorem}\label{th polugr} Let  $\mathbf{A}$ be a coordinatewise operator of the form (\ref{delovanje A}), where the operators $A_\alpha,\;\alpha\in \mathcal{I},$ defined on the same domain $D$ dense in $X,$ are infinitesimal generators of $C_0-$semigroups $(T_t)_\alpha,\;t\geq 0,\;\alpha\in \mathcal{I},$  uniformly bounded by
\begin{equation}\label{A1}
\|(T_t)_\alpha\|_{L(X)}\leq M e^{w t},\;t\geq 0,\quad \mbox{ for some }\;M, w>0.
\end{equation}
Let $\mathbf{B}\lozenge$ be of the form (\ref{delovanje B}), where $B_\alpha,\;\alpha\in \mathcal{I},$ are bounded linear operators on $X$  so that there exists $p>0$ such that
\begin{equation}\label{ogr B}
K:=\sum_{\alpha\in \mathcal{I}}\|B_\alpha\|(2\N)^{-p\frac{\alpha}{2}}<\infty.
\end{equation}
Let the initial value $U^0\in X\otimes (S)_{-1}$ be such that $U^0\in Dom(\mathbf{A})$ i.e.
  \begin{equation}\label{A2}
\begin{split}
 U^0(\omega)=\sum_{\alpha\in \mathcal{I}}u^0_\alpha H_\alpha(\omega)\in X\otimes (S)_{-1,-p},\;\mbox{satisfies}\quad\sum_{\alpha\in \mathcal{I}}\|u_\alpha^0\|_X^2 (2\N)^{-p \alpha}<\infty;
 \end{split}
 \end{equation}
 and
   \begin{equation}\label{A4}
\begin{split}
 \mathbf{A}U^0(\omega)=\sum_{\alpha\in \mathcal{I}}A_\alpha u^0_\alpha H_\alpha(\omega)\in X\otimes (S)_{-1,-p},\;\mbox{satisfies}\quad\sum_{\alpha\in \mathcal{I}}\|A_\alpha u_\alpha^0\|_X^2 (2\N)^{-p \alpha}<\infty.
 \end{split}
 \end{equation}
Moreover, let
\begin{equation}\label{A5}\begin{split} &F(t,\omega)=\sum_{\alpha\in \mathcal{I}}f_\alpha(t) H_\alpha(\omega)\in C^1([0,T],X)\otimes (S)_{-1},\;\;\;t\mapsto f_\alpha(t)\in C^1([0,T],X),\;\alpha\in \mathcal{I},\\
&\mbox{so that}\; \sum_{\alpha\in \mathcal{I}}\|f_\alpha\|_{C^1([0,T],X)}^2 (2\N)^{-p \alpha}=\sum_{\alpha\in \mathcal{I}}\Big(\sup_{t\in[0,T]}\|f_\alpha(t)\|_X+\sup_{t\in[0,T]}\|f'_\alpha(t)\|_X\Big)^2 (2\N)^{-p \alpha}<\infty.\end{split}\end{equation}

Then, the stochastic Cauchy problem (\ref{nasaJNA}) has a unique solution $U$  in $C^1([0,T],X)\otimes (S)_{-1,-p}$.

\end{theorem}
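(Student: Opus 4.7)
My approach will be the propagator method. First I would substitute the chaos expansions of $U$, $F$ and $U^0$ into \eqref{nasaJNA} and match coefficients of $H_\alpha$ on both sides using \eqref{delovanje A} and \eqref{delovanje B}. This reduces the SPDE to the infinite triangular system
\begin{equation*}
\frac{d}{dt}u_\alpha(t)=(A_\alpha+B_{\mathbf 0})u_\alpha(t)+g_\alpha(t),\quad u_\alpha(0)=u_\alpha^0,\quad g_\alpha(t):=\sum_{\mathbf 0<\beta\leq\alpha}B_\beta u_{\alpha-\beta}(t)+f_\alpha(t),
\end{equation*}
for $\alpha\in\mathcal I$. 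The system is triangular in $|\alpha|$ because only finitely many $\beta\leq\alpha$ exist and each $u_{\alpha-\beta}$ with $\beta\neq\mathbf 0$ has strictly smaller length. Since $B_{\mathbf 0}$ is bounded on $X$ (note $\|B_{\mathbf 0}\|\leq K$ by \eqref{ogr B}), the perturbation result recalled in Section \ref{0.1} guarantees that $A_\alpha+B_{\mathbf 0}$ generates a $C_0$-semigroup $(S_t)_\alpha$ on $X$, and \eqref{A1} yields a uniform bound $\|(S_t)_\alpha\|_{L(X)}\leq\tilde M:=Me^{(w+MK)T}$ on $[0,T]$, independent of $\alpha$.

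Next I would solve the system inductively on $|\alpha|$. The base case $\alpha=\mathbf 0$ uses $u_{\mathbf 0}^0\in D$ (implicit in \eqref{A4}) and $f_{\mathbf 0}\in C^1([0,T],X)$ from \eqref{A5}, giving a unique classical solution $u_{\mathbf 0}\in C^1([0,T],X)$ by the theory cited in Section \ref{0.1}. For the inductive step, the finite sum $g_\alpha$ belongs to $C^1([0,T],X)$ by boundedness and linearity of each $B_\beta$ together with the inductive hypothesis; combined with $u_\alpha^0\in D$, this produces a unique $u_\alpha\in C^1([0,T],X)$. Uniqueness of the coefficients at every level yields uniqueness of $U$.

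The main obstacle is proving that the formal series $U=\sum_\alpha u_\alpha H_\alpha$ actually converges in $C^1([0,T],X)\otimes(S)_{-1,-p}$. Starting from the Duhamel formula $u_\alpha(t)=(S_t)_\alpha u_\alpha^0+\int_0^t(S_{t-s})_\alpha g_\alpha(s)\,ds$ and the inequality $(a+b)^2\leq 2a^2+2b^2$, one obtains $\|u_\alpha(t)\|_X^2\leq 2\tilde M^2\|u_\alpha^0\|_X^2+2\tilde M^2 T\int_0^t\|g_\alpha(s)\|_X^2\,ds$. Multiplying by $(2\mathbb N)^{-p\alpha}$, summing over $\alpha\in\mathcal I$, and invoking the Wick product bound from Section \ref{0.3} whose hypothesis is exactly \eqref{ogr B}, namely
\begin{equation*}
\sum_{\alpha\in\mathcal I}\Big\|\sum_{\beta\leq\alpha}B_\beta u_{\alpha-\beta}(s)\Big\|_X^2(2\mathbb N)^{-p\alpha}\leq K^2\sum_{\alpha\in\mathcal I}\|u_\alpha(s)\|_X^2(2\mathbb N)^{-p\alpha},
\end{equation*}
leads to a linear Gr\"onwall-type inequality $N(t)\leq C_1+C_2\int_0^t N(s)\,ds$ for $N(t):=\|U(t)\|_{X\otimes(S)_{-1,-p}}^2$, whose right-hand constant $C_1$ is finite by \eqref{A2} and \eqref{A5}. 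Gr\"onwall's lemma then gives $\sup_{t\in[0,T]}N(t)<\infty$.

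Finally, to upgrade this to $C^1$-convergence in $t$, I would repeat the entire scheme for $v_\alpha:=u_\alpha'$, which satisfies an analogous triangular Cauchy problem of the same form, with initial datum $A_\alpha u_\alpha^0+g_\alpha(0)$ and forcing $g_\alpha'$. The weighted $\ell^2$-summability of the new initial data is guaranteed by \eqref{A4} (for $A_\alpha u_\alpha^0$), by \eqref{ogr B} combined with \eqref{A2} (for the term $\sum_{\beta\leq\alpha}B_\beta u_{\alpha-\beta}^0$), and by \eqref{A5} (for $f_\alpha(0)$ and $f_\alpha'$). A second Gr\"onwall estimate then controls $\sup_{t\in[0,T]}\|U'(t)\|_{X\otimes(S)_{-1,-p}}^2$, which completes the convergence argument and therefore the proof.
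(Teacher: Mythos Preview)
Your reduction to the triangular system, the use of the bounded perturbation $A_\alpha+B_{\mathbf 0}$, and the inductive construction of each $u_\alpha\in C^1([0,T],X)$ coincide with the paper's argument. The genuine difference is in the convergence step: the paper chooses $T_0$ small so that $C(T_0)K^2<1/5$, derives a \emph{self-referencing} inequality for the truncated sum $\sum_{\alpha\in\mathcal I_{n,m}}\sup_{t\in[0,T_0]}\|u_\alpha(t)\|_X^2(2\N)^{-p\alpha}$, solves it algebraically, and then iterates over the subintervals $[kT_0,(k+1)T_0]$ to reach $T$. You replace this by a single Gr\"onwall estimate on $[0,T]$, which is cleaner and avoids the iteration. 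Both strategies work.

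There are, however, two points in your write-up that need tightening. First, you apply Gr\"onwall to $N(t)=\sum_{\alpha\in\mathcal I}\|u_\alpha(t)\|_X^2(2\N)^{-p\alpha}$ before you know this sum is finite; the inequality $N(t)\leq C_1+C_2\int_0^t N(s)\,ds$ is vacuous if $N\equiv+\infty$. The remedy is the same device the paper uses: run the whole estimate for the finite partial sums over $\mathcal I_{n,m}=\{\alpha:|\alpha|\leq n,\ \mathrm{Index}(\alpha)\leq m\}$, observe that the Minkowski bound still closes (because $\alpha\in\mathcal I_{n,m}$ and $\beta\leq\alpha$ force $\alpha-\beta\in\mathcal I_{n,m}$), obtain a Gr\"onwall bound independent of $n,m$, and then let $n,m\to\infty$.

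Second, and more substantively, the quantity you bound is $\sup_{t}\sum_\alpha\|u_\alpha(t)\|_X^2(2\N)^{-p\alpha}$, whereas membership in $C^1([0,T],X)\otimes(S)_{-1,-p}$ requires control of $\sum_\alpha\sup_{t}\|u_\alpha(t)\|_X^2(2\N)^{-p\alpha}$, which is in general strictly larger. The paper's small-$T_0$ argument is set up precisely to bound the latter. Your Gr\"onwall route can be salvaged with almost no extra work: apply it instead to the nondecreasing function
\[
N^\ast_{n,m}(t):=\sum_{\alpha\in\mathcal I_{n,m}}\sup_{s\in[0,t]}\|u_\alpha(s)\|_X^2(2\N)^{-p\alpha},
\]
using that $\sup_{\tau\leq t}\|u_\alpha(\tau)\|_X^2\leq 2\tilde M^2\|u_\alpha^0\|_X^2+2\tilde M^2T\int_0^t\|g_\alpha(s)\|_X^2\,ds$ and that the Wick bound gives $\sum_\alpha\|g_\alpha(s)\|^2(2\N)^{-p\alpha}\leq 2K^2 N^\ast_{n,m}(s)+2\|F(s)\|^2$. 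The same modification applies to your treatment of $v_\alpha=u_\alpha'$; with these two fixes your argument is complete and arguably more direct than the paper's.
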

\begin{proof}
We seek for the solution in form of $U(t,\omega)=\sum_{\alpha\in \mathcal{I}}u_\alpha(t)H_\alpha(\omega)$. Then, the Cauchy problem (\ref{nasaJNA}) is equivalent to the infinite system:
\begin{equation}\label{CP2}\begin{split} \frac{d}{dt}\;u_\alpha(t)&= A_\alpha u_\alpha(t)+ \sum_{\beta\leq \alpha}B_\beta u_{\alpha-\beta}(t)+ f_\alpha(t),\quad t\in (0,T] ,\\
 u_\alpha(0)&=u_\alpha^0\in D,\quad \alpha\in \mathcal{I}.  \end{split}\end{equation}

Let $\mathbf{0}$ be the multi-index $\mathbf{0}=(0,0,...)$. We rewrite (\ref{CP2}) as

\begin{equation}\label{CP3}\begin{split} \frac{d}{dt}\;u_\alpha(t)&= (A_\alpha + B_\mathbf{0}) u_\alpha(t)+  \sum_{\mathbf{0}<\beta\leq \alpha}B_\beta u_{\alpha-\beta}(t)+ f_\alpha(t),\quad t\in(0,T],\\
 u_\alpha(0)&=u_\alpha^0\in D,\quad \alpha\in \mathcal{I}.  \end{split}\end{equation}

Next, $A_\alpha +B_\mathbf{0}$ are infinitesimal generators of $C_0-$semigroups $(S_t)_\alpha$ in $X$ such that
\begin{equation}\label{granica za St}
\|(S_t)_\alpha\|\leq M e^{(w+ M\|B_\mathbf{0}\|)t},\;t\geq 0,\;\alpha\in \mathcal{I}.\end{equation}

According to Subsection \ref{0.1}, if $f_\alpha,\;\alpha\in \mathcal{I},$ fulfills condition (i) or (ii), the inhomogeneous initial value problem (\ref{CP3}) has a solution $u_\alpha(t) \in C([0,T],X)\cap C^1((0,T],X)$, $\alpha\in \mathcal{I}$, given by

\begin{equation}\label{S1}\begin{split} u_\mathbf{0}(t)&= (S_t)_\mathbf{0} u_\mathbf{0}^0+ \int_0^t (S_{t-s})_\mathbf{0} f_\mathbf{0}(s)ds,\quad t\in[0,T] \\
u_\alpha(t)&= (S_t)_\alpha u_\alpha^0+  \int_0^t (S_{t-s})_\alpha \Big(\sum_{\mathbf{0}<\beta\leq \alpha}B_\beta u_{\alpha-\beta}(s)+ f_\alpha(s)\Big)ds,\quad t\in[0,T].\\
\end{split}\end{equation}
Since $f_\alpha\in C^1([0,T],X)$ it follows by induction on $\alpha$ that
 $$\sum_{\mathbf{0}<\beta\leq \alpha}B_\beta u_{\alpha-\beta}(s)+ f_\alpha(s)\in C^1([0,T],X),\quad\mbox{ for all }\quad \alpha\in\mathcal I.$$
Thus, $u_\alpha\in C^1([0,T],X)$ and $\frac{d}{dt}u_\alpha(0)=(A_\alpha+B_{\mathbf{0}}) u^0_\alpha+\sum_{\mathbf{0}<\beta\leq \alpha}B_\beta u_{\alpha-\beta}^0+ f_\alpha(0),\;\alpha\in \mathcal{I}$.

Note that for each fixed $\alpha\in\mathcal I$, $u_\alpha(t)$ exists for all $t\in[0,T]$ and it is the unique (classical) solution on the whole interval $[0,T]$. It remains to prove that $\sum_{\alpha\in\mathcal I}u_\alpha(t)H_\alpha(\omega)$ converges in $C^1([0,T],X)\otimes (S)_{-1,-p}$.

First, we show that $U(t,\omega)=\sum_{\alpha\in \mathcal{I}}u_\alpha(t)H_\alpha(\omega)\in C^1([0,T_0],X)\otimes S_{-1,-p}$ for appropriate $T_0\in(0,T]$, i.e. we show that
\begin{equation}\label{zapis ocene T0}\sum_{\alpha\in \mathcal{I}}\|u_\alpha\|_{C^1([0,T_0],X)}^2 (2\N)^{-p \alpha}=\sum_{\alpha\in \mathcal{I}}\Big(\sup_{t\in[0,T_0]}\|u_\alpha(t)\|_X+\sup_{t\in[0,T_0]}\|\frac{d}{dt}u_\alpha(t)\|_X\Big)^2 (2\N)^{-p \alpha}<\infty.\end{equation}

Later on we will prove that the same holds if we take in (\ref{zapis ocene T0}) supremums over the intervals $[T_0,2T_0]$, $[2T_0,3T_0] ,...$ etc. Since $[0,T]$ can be covered by finitely many intervals of the form $[kT_0,(k+1)T_0],\;k\in \N_0,$ we conclude that
\begin{equation}\label{zapis ocene T}\sum_{\alpha\in \mathcal{I}}\|u_\alpha\|_{C^1([0,T],X)}^2 (2\N)^{-p \alpha}=\sum_{\alpha\in \mathcal{I}}\Big(\sup_{t\in[0,T]}\|u_\alpha(t)\|_X+\sup_{t\in[0,T]}\|\frac{d}{dt}u_\alpha(t)\|_X\Big)^2 (2\N)^{-p \alpha}<\infty.\end{equation}
In order to do this, we introduce a notation for subsets of multi-indices $$\mathcal{I}_{n,m}=\{\alpha\in \mathcal{I}:\;|\alpha|\leq n \wedge {\rm Index}(\alpha)\leq m\},\;n,m\in\N,$$ where, for $\alpha=(\alpha_1,\alpha_2,\dots,\alpha_m,0,0,\dots)\in \mathcal{I}$, we have $|\alpha|=\alpha_1+\alpha_2+\dots+\alpha_m$ and ${\rm Index}(\alpha)$ is last coordinate where $\alpha$ has a nonzero entry. For later reference, we introduce the function
\begin{equation}\label{Ct} C(t)=\frac{M^2}{(w+M\|B_\mathbf{0}\|)^2}(e^{(w+M\|B_\mathbf{0}\|)t}-1)^2\end{equation}
and fix $T_0\in(0,T]$ such that $C(T_0)<\frac{1}{5K^2}$.

First, we show that $$\sum_{\alpha\in \mathcal{I}}\|u_\alpha(t)\|_{C([0,T_0],X)}^2(2\N)^{-p\alpha}=\sum_{\alpha\in \mathcal{I}}\sup_{t\in[0,T_0]}\|u_\alpha(t)\|_X^2(2\N)^{-p\alpha}<\infty,$$ by proving that partial sums $\sum_{\alpha\in \mathcal{I}_{n,m}}\sup_{t\in[0,T_0]}\|u_\alpha(t)\|_X^2(2\N)^{-p\alpha},\;n,m\in\N,$ are bounded from above.

Using (\ref{S1}) we obtain
\begin{align*}\frac{1}{3}\sum_{\alpha\in \mathcal{I}_{n,m}}\|u_\alpha(t)\|_X^2(2\N)^{-p\alpha}&\leq\sum_{\alpha\in \mathcal{I}_{n,m}}\|(S_t)_\alpha\|^2\|u_\alpha^0\|_X^2(2\N)^{-p\alpha}\\
&+\sum_{\alpha\in \mathcal{I}_{n,m}}\Big[\int_0^t \|(S_{t-s})_\alpha\|
\sum_{\mathbf{0}<\beta\leq \alpha}\|B_\beta u_{\alpha-\beta}(s)\|_X ds\Big]^2(2\N)^{-p\alpha}\\
&+\sum_{\alpha\in \mathcal{I}_{n,m}}\Big[\int_0^t \|(S_{t-s})_\alpha\| \|f_\alpha(s)\|_X ds\Big]^2(2\N)^{-p\alpha}.
\end{align*}
The first term on the right-hand side, for all $t\in[0,T_0]$, having in mind (\ref{A2}) and (\ref{granica za St}), satisfies
\begin{align}\label{Q1}
\sum_{\alpha\in \mathcal{I}_{n,m}}\|(S_t)_\alpha\|^2\|u_\alpha^0\|_X^2(2\N)^{-p\alpha}&\leq \sum_{\alpha\in \mathcal{I}}\|(S_t)_\alpha\|^2\|u_\alpha^0\|_X^2(2\N)^{-p\alpha}\nonumber\\
&\leq M^2e^{2(w+M\|B_\mathbf{0}\|)T_0}\sum_{\alpha\in \mathcal{I}}\|u_\alpha^0\|_X^2(2\N)^{-p\alpha}:=Q_1<\infty.
\end{align}
Similarly, for all $t\in[0,T_0]$, using (\ref{A5}) and (\ref{granica za St}), the third term satisfies
\begin{align}\label{G}
\sum_{\alpha\in \mathcal{I}_{n,m}}&\Big[\int_0^t \|(S_{t-s})_\alpha\| \|f_\alpha(s)\|_X ds\Big]^2(2\N)^{-p\alpha}\leq \sum_{\alpha\in \mathcal{I}}\Big[\int_0^t \|(S_{t-s})_\alpha\| \|f_\alpha(s)\|_X ds\Big]^2(2\N)^{-p\alpha}\nonumber\\
&\leq \Big[\int_0^t Me^{(w+M\|B_\mathbf{0}\|)(t-s)}ds\Big]^2 \sum_{\alpha\in \mathcal{I}}\sup_{s\in[0,t]} \|f_\alpha(s)\|_X^2(2\N)^{-p\alpha}\nonumber\\
&\leq \frac{M^2}{(w+M\|B_\mathbf{0}\|)^2}\Big(e^{(w+M\|B_\mathbf{0}\|)T_0}-1\Big)^2\sum_{\alpha\in \mathcal{I}}\sup_{t\in[0,T]} \|f_\alpha(t)\|_X^2(2\N)^{-p\alpha}:=G<\infty.
\end{align}
Note  that  in (\ref{G}) we took the supremum over the whole interval $[0,T]$.

For the second term, using (\ref{ogr B}), (\ref{granica za St}), (\ref{Ct}) and the generalized Minkowski inequality, we obtain
\begin{align}\label{E}
\sum_{\alpha\in \mathcal{I}_{n,m}}&\Big[\int_0^t \|(S_{t-s})_\alpha\|
\sum_{\beta+\gamma=\alpha}\|B_\beta\|\| u_{\gamma}(s)\|_X ds\Big]^2(2\N)^{-p\alpha}\nonumber\\
&\leq\frac{M^2}{(w+M\|B_\mathbf{0}\|)^2}\Big(e^{(w+M\|B_\mathbf{0}\|)t}-1\Big)^2\sum_{\alpha\in \mathcal{I}_{n,m}} \Big
[\sum_{\beta+\gamma=\alpha}\sup_{s\in[0,t]}\|B_\beta\|\| u_{\gamma}(s)\|_X\Big]^2(2\N)^{-p\alpha}\nonumber\\
&\leq C(T_0)\Big(\sum_{\beta\in \mathcal{I}_{n,m}}\|B_\beta\|(2\N)^{-p\frac{\beta}{2}}\Big)^2\Big(\sum_{\gamma\in \mathcal{I}_{n,m}}\sup_{t\in[0,T_0]}\|u_\gamma(t)\|_X^2(2\N)^{-p\gamma}\Big)
\nonumber\\
&\leq C(T_0)K^2\sum_{\alpha\in \mathcal{I}_{n,m}}\sup_{t\in[0,T_0]}\|u_\alpha(t)\|_X^2(2\N)^{-p\alpha}.
\end{align}
Finally, for all $n,m\in\N$, we obtain
\begin{align*}
\frac{1}{3}\sum_{\alpha\in \mathcal{I}_{n,m}}\sup_{t\in[0,T_0]}\|u_\alpha(t)\|_X^2(2\N)^{-p\alpha}\leq Q_1+G+C(T_0)K^2 \sum_{\alpha\in \mathcal{I}_{n,m}}\sup_{t\in[0,T_0]}\|u_\alpha(t)\|_X^2(2\N)^{-p\alpha}.
\end{align*}
Since $\frac{1}{3}-C(T_0)K^2>\frac{1}{5}-C(T_0)K^2>0$, we have
\begin{align}\label{parc suma}
\sum_{\alpha\in \mathcal{I}_{n,m}}\sup_{t\in[0,T_0]}\|u_\alpha(t)\|_X^2(2\N)^{-p\alpha}\leq \frac{Q_1+G}{\frac{1}{3}-C(T_0)K^2}.
\end{align}

Let $(m_n)_{n\in\N}$ be an arbitrary sequence of positive integers tending to infinity. Then, $$\sum_{\alpha\in \mathcal{I}}\sup_{t\in[0,T_0]}\|u_\alpha(t)\|_X^2(2\N)^{-p\alpha}=\lim_{n\to \infty}\sum_{\alpha\in \mathcal{I}_{n,m_n}}\sup_{t\in[0,T_0]}\|u_\alpha(t)\|_X^2(2\N)^{-p\alpha}\leq \frac{Q_1+G}{\frac{1}{3}-C(T_0)K^2},$$ since it is a series of positive numbers and thus does not depend on the order of summation.

Now we show that $$\sum_{\alpha\in \mathcal{I}}\|\frac{d}{dt}u_\alpha(t)\|_{C([0,T_0],X)}^2(2\N)^{-p\alpha}=\sum_{\alpha\in \mathcal{I}}\sup_{t\in[0,T_0]}\|\frac{d}{dt}u_\alpha(t)\|_X^2(2\N)^{-p\alpha}<\infty.$$ In order to acomplish that, we differentiate (\ref{S1}) with respect to $t$, and obtain
\begin{equation}\label{DS1}\begin{split} \frac{d}{dt}u_\mathbf{0}(t)&= (S_t)_\mathbf{0}(A_\mathbf{0}+B_\mathbf{0})u_\mathbf{0}^0+ \int_0^t (S_{t-s})_\mathbf{0}\frac{d}{ds} f_\mathbf{0}(s)ds+(S_t)_\mathbf{0}f(0),\quad t\in[0,T] ,\\
\frac{d}{dt}u_\alpha(t)&= (S_t)_\alpha (A_\alpha+B_\mathbf{0})u_\alpha^0+  \int_0^t (S_{t-s})_\alpha \Big(\sum_{\mathbf{0}<\beta\leq \alpha}B_\beta \frac{d}{ds} u_{\alpha-\beta}(s)+ \frac{d}{ds}f_\alpha(s)\Big)ds \\
& +(S_t)_\alpha\Big(\sum_{\mathbf{0}<\beta\leq \alpha}B_\beta u_{\alpha-\beta}(0)+f_\alpha(0)\Big) ,\quad t\in[0,T],\quad \alpha\in \mathcal{I}.\end{split}\end{equation}
In the sequel we estimate partial sums of $\sum_{\alpha\in \mathcal{I}}\sup_{t\in[0,T_0]}\|\frac{d}{dt}u_\alpha(t)\|_X^2(2\N)^{-p\alpha}$. So,

\begin{align*}\frac{1}{5}\sum_{\alpha\in \mathcal{I}_{n,m}}\|\frac{d}{dt}u_\alpha(t)\|_X^2(2\N)^{-p\alpha}&\leq\sum_{\alpha\in \mathcal{I}_{n,m}}\|(S_t)_\alpha\|^2\|(A_\alpha+B_\mathbf{0})u_\alpha^0\|_X^2(2\N)^{-p\alpha}\\
&+\sum_{\alpha\in \mathcal{I}_{n,m}}\Big[\int_0^t \|(S_{t-s})_\alpha\|
\sum_{\mathbf{0}<\beta\leq \alpha}\|B_\beta \frac{d}{ds}u_{\alpha-\beta}(s)\|_X ds\Big]^2(2\N)^{-p\alpha}\\
&+\sum_{\alpha\in \mathcal{I}_{n,m}}\Big[\int_0^t \|(S_{t-s})_\alpha\| \|\frac{d}{ds}f_\alpha(s)\|_X ds\Big]^2(2\N)^{-p\alpha}\\
&+\sum_{\alpha\in \mathcal{I}_{n,m}}\|(S_{t})_\alpha\|^2 \Big
[\sum_{\mathbf{0}<\beta\leq \alpha}\|B_\beta u_{\alpha-\beta}(0)\|_X\Big]^2(2\N)^{-p\alpha}\\
&+\sum_{\alpha\in \mathcal{I}_{n,m}} \|(S_{t})_\alpha\|^2 \|f_\alpha(0)\|_X^2 (2\N)^{-p\alpha}.
\end{align*}
According to (\ref{A2}) and (\ref{A4}), we obtain $\sum_{\alpha\in \mathcal{I}}(A_\alpha+B_\mathbf{0})u_\alpha^0H_\alpha(\omega)\in X\otimes (S)_{-1,-p} $. So the first term on the right-hand side can be evaluated by
\begin{align}\label{Q'1}
\sum_{\alpha\in \mathcal{I}_{n,m}}&\|(S_t)_\alpha\|^2\|(A_\alpha+B_\mathbf{0})u_\alpha^0\|_X^2(2\N)^{-p\alpha}\leq \sum_{\alpha\in \mathcal{I}}\|(S_t)_\alpha\|^2\|(A_\alpha+B_\mathbf{0})u_\alpha^0\|_X^2(2\N)^{-p\alpha}\nonumber\\
&\leq M^2e^{2(w+M\|B_\mathbf{0}\|)T_0}\sum_{\alpha\in \mathcal{I}}\|(A_\alpha+B_\mathbf{0})u_\alpha^0\|_X^2(2\N)^{-p\alpha}:=Q'_1<\infty.
\end{align}
The third term, for all $t\in[0,T_0]$, satisfies
\begin{align}\label{G'}
\sum_{\alpha\in \mathcal{I}_{n,m}}&\Big[\int_0^t \|(S_{t-s})_\alpha\| \|\frac{d}{ds}f_\alpha(s)\| ds\Big]^2(2\N)^{-p\alpha}\leq \sum_{\alpha\in \mathcal{I}}\Big[\int_0^t \|(S_{t-s})_\alpha\| \|\frac{d}{ds}f_\alpha(s)\|_X ds\Big]^2(2\N)^{-p\alpha}\nonumber\\
&\leq \frac{M^2}{(w+M\|B_\mathbf{0}\|)^2}\Big(e^{(w+M\|B_\mathbf{0}\|)T_0}-1\Big)^2\sum_{\alpha\in \mathcal{I}}\sup_{t\in[0,T]} \|\frac{d}{ds}f_\alpha(t)\|_X^2(2\N)^{-p\alpha}:=G'<\infty.
\end{align}
The fourth term, using (\ref{ogr B}), (\ref{A2}), (\ref{granica za St}) and the generalized Minkowski inequality, can be estimated by
\begin{align}\label{H'}
\sum_{\alpha\in \mathcal{I}_{n,m}}&\|(S_{t})_\alpha\|^2 \Big
[\sum_{\mathbf{0}<\beta\leq \alpha}\|B_\beta u_{\alpha-\beta}(0)\|_X\Big]^2(2\N)^{-p\alpha} \leq \sum_{\alpha\in \mathcal{I}}\|(S_{t})_\alpha\|^2 \Big
[\sum_{\beta+\gamma=\alpha}\|B_\beta u_{\gamma}^0\|_X\Big]^2(2\N)^{-p\alpha}\nonumber\\
&\leq M^2e^{2(w+M\|B_\mathbf{0}\|)t}\sum_{\alpha\in \mathcal{I}}\Big
[\sum_{\beta+\gamma=\alpha}\|B_\beta\|\| u_{\gamma}^0\|_X\Big]^2(2\N)^{-p\alpha}\nonumber\\
&\leq M^2e^{2(w+M\|B_\mathbf{0}\|)T_0}\Big(\sum_{\beta\in \mathcal{I}}\|B_\beta\|(2\N)^{-p\frac{\beta}{2}}\Big)^2\Big(\sum_{\gamma\in \mathcal{I}}\|u_\gamma^0\|_X^2(2\N)^{-p\gamma}\Big):=H_1'<\infty.
\end{align}
For the fifth term, using (\ref{A5}) and (\ref{granica za St}), we have
\begin{align}\label{N'}
\sum_{\alpha\in \mathcal{I}_{n,m}} &\|(S_{t})_\alpha\|^2 \|f_\alpha(0)\|_X^2 (2\N)^{-p\alpha}\leq \sum_{\alpha\in \mathcal{I}} \|(S_{t})_\alpha\|^2 \|f_\alpha(0)\|_X^2 (2\N)^{-p\alpha}\nonumber\\
&\leq M^2e^{2(w+M\|B_\mathbf{0}\|)T_0}\sum_{\alpha\in \mathcal{I}}\sup_{t\in[0,T]} \|f_\alpha(t)\|_X^2(2\N)^{-p\alpha}:=N'<\infty.
\end{align}
Finally, for the second term, using (\ref{ogr B}), (\ref{granica za St}), (\ref{Ct}) and the generalized Minkowski inequality, we obtain
\begin{align}\label{E'}
\sum_{\alpha\in \mathcal{I}_{n,m}}&\Big[\int_0^t \|(S_{t-s})_\alpha\|
\sum_{\beta+\gamma=\alpha}\|B_\beta\|\|\frac{d}{ds} u_{\gamma}(s)\|_X ds\Big]^2(2\N)^{-p\alpha}\nonumber\\
&\leq \frac{M^2}{(w+M\|B_\mathbf{0}\|)^2}\Big(e^{(w+M\|B_\mathbf{0}\|)t}-1\Big)^2\sum_{\alpha\in \mathcal{I}_{n,m}} \Big
[\sum_{\beta+\gamma=\alpha}\sup_{s\in[0,t]}\|B_\beta\|\|\frac{d}{ds} u_{\gamma}(s)\|_X\Big]^2(2\N)^{-p\alpha}\nonumber\\
&\leq C(t)\Big(\sum_{\beta\in \mathcal{I}_{n,m}}\|B_\beta\|(2\N)^{-p\frac{\beta}{2}}\Big)^2\Big(\sum_{\gamma\in \mathcal{I}_{n,m}}\sup_{s\in[0,t]}\|\frac{d}{dt}u_\gamma(s)\|_X^2(2\N)^{-p\gamma}\Big)
\nonumber\\
&\leq C(T_0)K^2\sum_{\alpha\in \mathcal{I}_{n,m}}\sup_{t\in[0,T_0]}\|\frac{d}{dt}u_\alpha(t)\|_X^2(2\N)^{-p\alpha}.
\end{align}
 Finally, for all $n,m\in\N$, we obtain
\begin{align*}
\frac{1}{5}\sum_{\alpha\in \mathcal{I}_{n,m}}\sup_{t\in[0,T_0]}\|\frac{d}{dt}u_\alpha(t)\|_X^2(2\N)^{-p\alpha}\leq & Q'_1+G'+H'_1+N'\\
&+C(T_0)K^2 \sum_{\alpha\in \mathcal{I}_{n,m}}\sup_{t\in[0,T_0]}\|\frac{d}{dt}u_\alpha(t)\|_X^2(2\N)^{-p\alpha}.
\end{align*}
Since $\frac{1}{5}-C(T_0)K^2>0$, we have
\begin{align}\label{parc suma'}
\sum_{\alpha\in \mathcal{I}_{n,m}}\sup_{t\in[0,T_0]}\|\frac{d}{dt}u_\alpha(t)\|_X^2(2\N)^{-p\alpha}\leq \frac{Q'_1+G'+H'_1+N'}{\frac{1}{5}-C(T_0)K^2}.
\end{align}
Again, taking $(m_n)_{n\in\N}$ to be an arbitrary sequence of positive integers tending to infinity, we have $$\sum_{\alpha\in \mathcal{I}}\sup_{t\in[0,T_0]}\|\frac{d}{dt}u_\alpha(t)\|_X^2(2\N)^{-p\alpha}=\lim_{n\to \infty}\sum_{\alpha\in \mathcal{I}_{n,m_n}}\sup_{t\in[0,T_0]}\|\frac{d}{dt}u_\alpha(t)\|_X^2(2\N)^{-p\alpha}\leq \frac{Q'_1+G'+H'_1+N'}{\frac{1}{5}-C(T_0)K^2}.$$
Therefore, we obtain
\begin{equation}\label{ocena na T0}\begin{split} &U(t,\omega)\in C^1([0,T_0],X)\otimes (S)_{-1,-p},\;\mbox{i.e.}\\
&\sum_{\alpha\in \mathcal{I}}\Big(\sup_{t\in[0,T_0]}\|u_\alpha(t)\|_X+\sup_{t\in[0,T_0]}\|\frac{d}{dt}u_\alpha(t)\|_X\Big)^2 (2\N)^{-p \alpha}\leq\\
&2\sum_{\alpha\in \mathcal{I}}\Big(\sup_{t\in[0,T_0]}\|u_\alpha(t)\|^2_X+\sup_{t\in[0,T_0]}\|\frac{d}{dt}u_\alpha(t)\|^2_X\Big) (2\N)^{-p \alpha}<\infty.\end{split}
\end{equation}

Next, we consider in (\ref{ocena na T0}) supremums over the interval $[T_0,2T_0]$. On  $[T_0,2T_0]$ one can rewrite the initial value problem (\ref{CP2}) in the following equivalent form:
\begin{equation}\label{CP4}\begin{split} \frac{d}{dt}\;v_\alpha(t)&= A_\alpha v_\alpha(t)+ \sum_{\beta\leq \alpha}B_\beta v_{\alpha-\beta}(t)+ f_\alpha(T_0+t),\quad t\in (0,T_0] \\
 v_\alpha(0)&=v_\alpha^0:=u_\alpha(T_0),\quad \alpha\in \mathcal{I}.  \end{split}\end{equation}
The semigroup corresponding to the generator $A_\alpha+B_\mathbf{0}$ in (\ref{CP4}) is again the semigroup $(S_t)_\alpha,\;t\geq 0$. Using (\ref{CP2}) and (\ref{ocena na T0}), we have that $U(t,\omega)\in Dom(\mathbf{A}),$ for all $t\in[0,T_0],$ and $\mathbf{A}U(t,\omega)\in X\otimes(S)_{-1,-p},\;t\in[0,T_0].$ According to this we have that $V^0(\omega)=U(T_0,\omega)=\sum_{\alpha\in \mathcal{I}}v^0_\alpha H_\alpha(\omega)\in Dom(\mathbf{A})$ and $\mathbf{A}V^0(\omega)\in X\otimes(S)_{-1,-p}.$ Thus,
$$v_\alpha(t)= (S_t)_\alpha v_\alpha^0+  \int_0^t (S_{t-s})_\alpha \Big(\sum_{\mathbf{0}<\beta\leq \alpha}B_\beta v_{\alpha-\beta}(s)+ f_\alpha(T_0+s)\Big)ds,\quad t\in[0,T_0],$$
and clearly $v_\alpha(t)=u_\alpha(T_0+t),\;t\in[0,T_0],\;\alpha\in \mathcal{I}$.

When approximating partial sums of $\sum_{\alpha\in \mathcal{I}}\sup_{t\in[0,T_0]}\|v_\alpha(t)\|_X^2(2\N)^{-p\alpha}$, comparing to the previous calculations for $u_\alpha(t)$, only the constant $Q_1$ will be different, and here, we denote it by $Q_2$, so we again obtain
$$\sum_{\alpha\in \mathcal{I}}\sup_{t\in[0,T_0]}\|v_\alpha(t)\|_X^2(2\N)^{-p\alpha}=\sum_{\alpha\in \mathcal{I}}\sup_{t\in[T_0,2T_0]}\|u_\alpha(t)\|_X^2(2\N)^{-p\alpha}\leq \frac{Q_2+G}{\frac{1}{3}-C(T_0)K^2}.$$
Similarly, for the derivative $\frac{d}{dt}V(t,\omega)$ we obtain
$$\sum_{\alpha\in \mathcal{I}}\sup_{t\in[0,T_0]}\|\frac{d}{dt}v_\alpha(t)\|_X^2(2\N)^{-p\alpha} \leq \frac{Q'_2+G'+H'_2+N'}{\frac{1}{5}-C(T_0)K^2},$$
where, comparing to the estimates of $\frac{d}{dt}U(t,\omega)$, only the constants $Q'_1$ and $H'_1$ have changed and we denoted them here by $Q'_2$ and $H'_2$.

For arbitrary $T>0$, one can cover the interval $[0,T]$ by intervals of the form $[kT_0,(k+1)T_0],\;k\in\N_0,$ in finitely many steps (say in $l$ steps). So we have
$$\sum_{\alpha\in \mathcal{I}}\sup_{t\in[0,T]}\|u_\alpha(t)\|_X^2(2\N)^{-p\alpha}\leq \frac{Q+G}{\frac{1}{3}-C(T_0)K^2},$$
where $Q=\max_{1\leq k\leq l}\{Q_k\}$. Thus,
$$U(t,\omega)=\sum_{\alpha\in \mathcal{I}}u_\alpha(t)H_\alpha(\omega)\in C([0,T],X)\otimes (S)_{-1,-p}.$$
Also,
$$\sum_{\alpha\in \mathcal{I}}\sup_{t\in[0,T]}\|\frac{d}{dt}u_\alpha(t)\|_X^2(2\N)^{-p\alpha} \leq \frac{Q'+G'+H'+N'}{\frac{1}{5}-C(T_0)K^2},$$
where $Q'=\max_{1\leq k\leq l}\{Q'_k\}$, $H'=\max_{1\leq k\leq l}\{H'_k\}$. Since $\frac{d}{dt}u_\alpha(t)\in C([0,T],X)$, $\alpha\in\mathcal I$, we have
$$\frac{d}{dt}U(t,\omega)=\sum_{\alpha\in \mathcal{I}}\frac{d}{dt}u_\alpha(t)H_\alpha(\omega)\in C([0,T],X)\otimes (S)_{-1,-p}.$$
Therefore, $U(t,\omega)\in C^1([0,T],X)\otimes (S)_{-1,-p}$ and
thus, $U$ is a solution of (\ref{nasaJNA}) in the sense of Definition \ref{U resaca CP}.

The solution $U$ is unique due to the uniqueness of the coordinatewise (classical) solutions $u_\alpha$ in (\ref{S1}) and due to uniqueness in the Wiener-It\^o chaos expansion.

\end{proof}

Note that according to the previous theorem the solution $U$ remains in the same stochastic order space $(S)_{-1,-p}$ where the input data $U^0,\; \mathbf{A}U^0$ and $F$ belong to.

\begin{example} We provide three examples  of equation \eqref{nasaJNA} where $\mathbf A$ is a uniformly bounded (not a simple) coordinatewise operator.
Consider the Banach space $X=L^p(\R),\;1\leq p<\infty,$  and the stochastic Cauchy problem
\begin{equation}\label{ex1}\begin{split}\frac{d}{dt}U(t,x,\omega) &= \mathbf{A}U(t,x,\omega) + W\lozenge U(t,x,\omega) + F(t,x,\omega),\\
U(0,x,\omega)&=U^0(x,\omega),\end{split}\end{equation}
where the operator $\mathbf{A}:Dom(\mathbf{A})\to X\otimes(S)_{-1}$ is a coordinatewise operator composed out of a family of closed operators $\{A_\alpha\}_{\alpha\in \mathcal{I}}$ of the form $A_\alpha=a_\alpha D,\;\alpha\in \mathcal{I}$, where the functions $a_\alpha\in L^\infty(\R),\;\alpha\in \mathcal{I},$ are uniformly bounded, i.e. $\sup_{x\in \R}|a_\alpha(x)|\leq M,\;\alpha\in \mathcal{I},$ for some $M>0,$ and  $D$ is one of the following differential operators: $\frac{\partial}{\partial x},\;\frac{\partial^2}{\partial x^2}$ or $\frac{\partial^2}{\partial x^2}+\frac{\partial}{\partial x}$,
and $W=\sum_{k\in \N}\xi_k H_{\varepsilon_k}$ represents spatial white noise.
Then, (\ref{ex1}) is equivalent to the infinite system
\begin{equation*}\begin{split}\frac{d}{dt} u_\alpha(t,x) &= A_\alpha u_\alpha(t,x) +\sum_{k\in \N}\xi_k(x)u_{\alpha-\varepsilon_k}(t,x)  + f_\alpha(t,x)\\
u_\alpha(0,x)&=u_\alpha^0(x),\quad\alpha\in \mathcal{I}.\end{split}\end{equation*}

The $C_0-$semigroup that corresponds to the closed operator $D$, denoted by $T_t,\;t\geq 0,$ is, respectively,
\begin{align*}T_tg(x)&=g(t+x),\;\;g\in L^p(\R), \qquad \mbox{for }\; D=\frac{\partial}{\partial x},\\
T_tg(x)&=\frac{1}{\sqrt{4\pi t}}\int_{\R}g(x-y)e^{-\frac{y^2}{4t}}dy,\;\;g\in L^p(\R),\qquad \mbox{for }\; D=\frac{\partial^2}{\partial x^2},\\
T_tg(x)&=\frac{1}{\sqrt{4\pi t}}\int_{\R}g(x-y)e^{-\frac{(y+t)^2}{4t}}dy,\;\;g\in L^p(\R),\qquad \mbox{for }\; D=\frac{\partial^2}{\partial x^2}+\frac{\partial}{\partial x}.
\end{align*}
In all cases, we have, using Young's inequality, that $\|T_t\|\leq 1,\;t\geq 0.$ The $C_0-$semigroups corresponding to the operators $A_\alpha,\;\alpha\in \mathcal{I},$ are of the form $(S_t)_\alpha=a_\alpha T_t$. Thus, $\|(S_t)_\alpha\|\leq M,\;\alpha\in \mathcal{I}$. The operators $B_\alpha,\;\alpha\in \mathcal{I},$ are given by $B_{\varepsilon_k}=\xi_k,\;k\in\N$ and $B_\alpha=0,\;\alpha\neq\varepsilon_k.$ Thus, $\|B_\alpha\|\leq\sup_{k\in\mathbb N}\|\xi_k\|_{L^\infty(\mathbb R)}\leq 1,\;\alpha\in \mathcal{I}$. Now, according to Theorem \ref{th polugr}, equation \eqref{ex1} has a
unique solution $U(t,x,\omega)=\sum_{\alpha\in \mathcal{I}}u_\alpha(t,x)H_\alpha(\omega),$ where
$$u_\alpha(t,x)=(S_t)_\alpha u_\alpha^0(x)+\int_0^t (S_{t-s})_\alpha(\sum_{k}\xi_k(x)u_{\alpha-\varepsilon_k}(s,x)+f_\alpha(s,x))ds,\;\alpha\in \mathcal{I}.$$
\end{example}

\begin{example} Consider the Cauchy problem
\begin{equation*}\begin{split}\frac{d}{dt}U(t,\omega) &= \mathbf{A}U(t,\omega) + \mathbf{B}\lozenge U(t,\omega) + F(t,\omega)\\
U(0,\omega)&=U^0(\omega),\end{split}\end{equation*}
where $\mathbf  A$ is a simple coordinatewise operator $A_\alpha=A$, $\alpha\in\mathcal I$, generating a $C_0-$semigroup, $B_\alpha\neq 0$ only for $\alpha=\varepsilon_k,\;k\in \N,$ are such that $\sum_{k\in\N}\|B_{\varepsilon_k}\|(2k)^{-\frac{p}{2}}<\infty$, and $U^0$ and $F$ are deterministic functions, i.e. $u^0_\alpha= 0$ and $f_\alpha= 0$ for all $\alpha\in\mathcal I\setminus\{\mathbf 0\}$.

The solution of this system, according to Theorem \ref{th polugr}, is
\begin{equation*}\begin{split}u_\mathbf{0}(t)&=T_tu^0_\mathbf{0}+\int_0^t T_{t-s}f_{\mathbf{0}}(s)ds,\\
u_\alpha(t)&=\int_0^tT_{t-s}\Big(\sum_{k\in \N}B_{\varepsilon_k}u_{\alpha-\varepsilon_k}(s)\Big)ds,\quad \alpha\in \mathcal{I}\setminus{\mathbf 0},\end{split}\end{equation*}
the same form as it was obtained in \cite{Boris}.
\end{example}

We provide two generalisations of Theorem \ref{th polugr}: one possibility is to allow the operators $B_\alpha$ to depend on the time variable $t$ (except for $B_{\mathbf 0}$ which must be free of $t$). This embraces for example SPDEs driven by space-time noises which have zero expectation (and are thus free of $t$). The other possibility is to allow $B_{\mathbf 0}$ to be unbounded but satisfying certain properties so that $A_\alpha+B_{\mathbf 0}$ are infinitesimal generators of $C_0-$semigroups. For example, if $A_\alpha=\frac{\partial^2}{\partial x^2}$ and $B_{\mathbf 0}=\frac{\partial}{\partial x}$, then although $B_0$ is unbounded, $A_\alpha+B_{\mathbf 0}$ is the generator of a contraction semigroup. Following \cite{Nagel} we will enlist some sufficient conditions which ensure that $A_\alpha+B_{\mathbf 0}$ is the  generators of a $C_0-$semigroup.

\begin{remark} In Theorem \ref{th polugr} one can consider operators $B_\alpha(t),\;\alpha\in \mathcal{I}\setminus\{\mathbf{0}\}$, depending on $t$, so that $B_\alpha\in C^1([0,T],L(X)),\;\alpha\in \mathcal{I}\setminus \{\mathbf{0}\}$, $B_\mathbf{0}(t)=B_\mathbf{0}\in L(X),$ for all $t\in [0,T],$ and
\begin{align}\label{ogr B(t)}
K:&=\sum_{\alpha\in \mathcal{I},\atop \alpha>\mathbf 0}\|B_\alpha\|_{C^1([0,T],L(X))}(2\N)^{-p\frac{\alpha}{2}}\nonumber\\
&=\sum_{\alpha\in \mathcal{I},\atop \alpha>\mathbf 0}\Big(\sup_{t\in[0,T]}\|B_\alpha(t)\|_{L(X)}+\sup_{t\in[0,T]}\|\frac{d}{dt}B_\alpha(t)\|_{L(X)}\Big)(2\N)^{-p\frac{\alpha}{2}}<\infty.
\end{align}
Replacing (\ref{ogr B}) by (\ref{ogr B(t)}) and retaining all other assumptions of Theorem \ref{th polugr}, one can again obtain a unique solution $U$ in $C^1([0,T],X)\otimes (S)_{-1,-p}$ of the corresponding Cauchy problem (\ref{nasaJNA}).

The solution is $U(t,\omega)=\sum_{\alpha\in \mathcal{I}}u_\alpha(t)H_\alpha(\omega),$ $u_\alpha(t) \in C^1([0,T],X)$, $ \alpha\in \mathcal{I}$,
where (see (\ref{S1}))
\begin{equation}\label{S1(t)}\begin{split} u_\mathbf{0}(t)&= (S_t)_\mathbf{0} u_\mathbf{0}^0+ \int_0^t (S_{t-s})_\mathbf{0} f_\mathbf{0}(s)ds,\quad t\in[0,T] ,\\
u_\alpha(t)&= (S_t)_\alpha u_\alpha^0+  \int_0^t (S_{t-s})_\alpha \Big(\sum_{\mathbf{0}<\beta\leq \alpha}B_\beta(s) u_{\alpha-\beta}(s)+ f_\alpha(s)\Big)ds,\quad t\in[0,T].\\
\end{split}\end{equation}

Its derivative is $\frac{d}{dt}U(t,\omega)=\sum_{\alpha\in \mathcal{I}}\frac{d}{dt}u_\alpha(t)H_\alpha(\omega),$ where (see (\ref{DS1}))
\begin{equation}\label{DS1{t}}\begin{split} \frac{d}{dt}u_\mathbf{0}(t)&= (S_t)_\mathbf{0}(A_\mathbf{0}+B_\mathbf{0})u_\mathbf{0}^0+ \int_0^t (S_{t-s})_\mathbf{0}\frac{d}{ds} f_\mathbf{0}(s)ds+(S_t)_\mathbf{0}f(0),\quad t\in[0,T] ,\\
\frac{d}{dt}u_\alpha(t)&= (S_t)_\alpha (A_\alpha+B_\mathbf{0})u_\alpha^0\\
&+  \int_0^t (S_{t-s})_\alpha \Big(\sum_{\mathbf{0}<\beta\leq \alpha}\Big(B_\beta(s) \frac{d}{ds} u_{\alpha-\beta}(s)+\frac{d}{ds}B_\beta (s) u_{\alpha-\beta}(s)\Big)+ \frac{d}{ds}f_\alpha(s)\Big)ds \\
& +(S_t)_\alpha\Big(\sum_{\mathbf{0}<\beta\leq \alpha}B_\beta(0) u_{\alpha-\beta}(0)+f_\alpha(0)\Big) ,\quad t\in[0,T],\quad \alpha\in \mathcal{I}.\end{split}\end{equation}
The proof can be performed in the same manner as in Theorem \ref{th polugr}, now taking $T_0\in (0,T]$ to be small enough so that $C(T_0)<\frac{1}{6K^2},$ since now we have six summands in (\ref{DS1{t}}) instead of the previous five in (\ref{DS1}).
\end{remark}

\begin{remark}
In Theorem \ref{th polugr} one can consider the operator $B_{\mathbf 0}$ to be unbounded, densely defined on $D$ (the same domain which is common for all $A_\alpha$) so that either of the following holds:
\begin{itemize}

\item[(i)] $A_\alpha$, $\alpha\in\mathcal I$, are generating contraction semigroups (i.e. $M=1$, $w=0$), and $B_{\mathbf 0}$ is dissipative, $A_\alpha-$bounded with $a_\alpha^0<1$ (i.e. there exist $a_\alpha,b_\alpha>0$ such that $\|B_{\mathbf 0}x\|\leq a_\alpha\| A_\alpha x\|+b_\alpha \|x\|$, $x\in D$, and $a_\alpha^0=\inf\{a_\alpha>0: \exists b_\alpha>0, \forall x\in D, \|B_{\mathbf 0}x\|\leq a_\alpha\| A_\alpha x\|+b_\alpha \|x\|\}$), for all $\alpha\in\mathcal I$,

\item[(ii)] $B_{\mathbf 0}$ is closable, dissipative and $A_\alpha-$compact (i.e. $B:(D,\|\cdot\|_{A_\alpha})\rightarrow X$ is compact where $\|\cdot\|_{A_\alpha}$ denotes the graph norm), for all $\alpha\in\mathcal I$,

\item[(iii)] $A_\alpha$ are generating analytic semigroups (i.e. $w<0$), $\alpha\in\mathcal I$, and $B_{\mathbf 0}$ is closable and $A_\alpha-$compact .
\end{itemize}
Then, $A_\alpha+B_{\mathbf 0}$ is the infinitesimal generator of a $C_0-$semigroup  (denote it $(S_t)_\alpha$) for all $\alpha\in\mathcal I$. If the semigroups  $(T_t)_\alpha$ corresponding to $A_\alpha$ are uniformly bounded in $\alpha$, then so will be $(S_t)_\alpha$. Retaining all other assumptions of Theorem \ref{th polugr}, now we follow the same proof pattern with the semigroup $(S_t)_\alpha$, $\|(S_t)_\alpha\|\leq \tilde M e^{\tilde w t}$, for some $\tilde M\geq1$, $\tilde w\in\mathbb R$, independent of $\alpha$.

Finally we note that in case (i) and (ii) $A_\alpha+B_{\mathbf 0}$ will be generating contraction semigroups, while in case (iii) they will be generating analytic semigroups.

\end{remark}

\section{Stationary equations}

In this section we consider stationary equations of
 the form
\begin{equation}
\label{elipt jedn} \mathbf{A} U + \mathbf{B} \lozenge U + F = 0,
\end{equation}
 where $\mathbf A:\,  X\otimes (S)_{-1} \rightarrow X\otimes (S)_{-1}$ and $\mathbf B \lozenge:\, X\otimes (S)_{-1} \rightarrow X\otimes (S)_{-1}$ are  coordinatewise operators as in (\ref{delovanje A}) and (\ref{delovanje B}). We assume that $\{A_\alpha\}_{\alpha\in \mathcal{I}} $ and $\{B_\alpha\}_{\alpha\in \mathcal{I}} $ are bounded operators and that $A_\alpha$  are of the form
\begin{equation}A_\alpha = \widetilde{A}_\alpha +  C_\alpha,\;\;\;\alpha \in\mathcal I,\nonumber\end{equation} where  $B_\mathbf{0}$ and $\widetilde{A}_\alpha$, $\alpha\in \mathcal{I}$
are compact  operators and $C_\alpha$ are self adjoint operators for all $\alpha\in \mathcal{I}$. Denote by $r_\alpha$ the eigenvalue corresponding to the orthogonal family of eigenvectors $H_\alpha$, i.e.  $C_\alpha (H_\alpha) = r_\alpha H_\alpha$, $\alpha\in \mathcal{I}$. Using classical results of elliptic PDEs and the Fredholm alternative (see \cite{Gilbarg})  we prove existence and uniqueness of the solution to \eqref{elipt jedn}.

\begin{theorem}
\label{Fredholm appl} Let $X$ be a Banach space. Let
$\mathbf A:\,  X\otimes (S)_{-1} \rightarrow X\otimes (S)_{-1}$ and $\mathbf B \lozenge:\, X\otimes (S)_{-1} \rightarrow X\otimes (S)_{-1}$ be coordinatewise operators, for which the following assumptions hold:
\begin{enumerate}

\item $\mathbf{A}$ is of the form $\mathbf{A}=\mathbf{\widetilde{A}}+ \mathbf{C}$, where  $\mathbf{\widetilde{A}}(U) = \sum\limits_{\alpha\in \mathcal{I}} \widetilde{A}_\alpha (u_\alpha) H_\alpha$ and $\widetilde{A}_\alpha : X \rightarrow X$ are compact operators for all $\alpha\in\mathcal I$, $\mathbf{C} (U)= \sum\limits_{\alpha\in \mathcal{I}}\,  r_\alpha u_\alpha H_\alpha$,  $r_\alpha\in\mathbb R$, $\alpha\in\mathcal I$, and $\mathbf B$ is of the form (\ref{delovanje B}), where $B_{\mathbf 0}:X\rightarrow X$ is a compact operator. Assume there exists $K>0$ such that:
\begin{equation}\label{uslov za ralpha}
 - \|\widetilde{A}_\alpha\| - \|B_\mathbf{0}\|-r_\alpha\geq 0, \quad\mbox{ for all }\;\;\alpha\in\mathcal I,
\end{equation}
and
\begin{equation}
\label{uslov za operator a} \sup\limits_{\alpha\in
\mathcal{I}}\left(\frac{1}{- r_\alpha - \|\widetilde{A}_\alpha\| - \|B_\mathbf{0}\|}\right) <  K.
\end{equation}

\item $\mathbf{B}$ is of the form (\ref{delovanje B}), where $B_\beta:X\rightarrow X$, $\beta\in\mathcal I\setminus\{\mathbf 0\}$, are bounded operators and
there
exists $p>0$ such that
\begin{equation}
\label{uslov koren iz dva} K\sum\limits_{\beta\in\mathcal I \atop\beta>\mathbf 0}\|B_\beta\| \,
(2\mathbb{N})^{\frac{-p\beta}{2}} < \frac{1}{\sqrt{2}}.
\end{equation}

\item For every $\alpha\in \mathcal{I}$
\begin{equation}\label{jezgro} {\rm Ker} \left(\widetilde{A}_\alpha + (1+ r_\alpha) {\rm Id} + B_\mathbf{0}\right) = \{0\}.
\end{equation}

\end{enumerate}

Then, for every $F\in X\otimes (S)_{-1,-p}$ there
exists a unique solution $U\in X\otimes (S)_{-1,-p}$ to equation
\eqref{elipt jedn}.
\end{theorem}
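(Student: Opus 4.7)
My plan is to reduce \eqref{elipt jedn} to an infinite triangular system via chaos expansion, invert each coordinate equation using the Fredholm alternative, and then control the resulting series in $X\otimes(S)_{-1,-p}$ through the convolution estimate already used in Section~\ref{0.3}. Writing $U=\sum_\alpha u_\alpha H_\alpha$ and $F=\sum_\alpha f_\alpha H_\alpha$ and isolating the $\beta=\mathbf 0$ term of $\mathbf B\lozenge U$, the equation \eqref{elipt jedn} is equivalent, for every $\alpha\in\mathcal I$, to
$$
T_\alpha(u_\alpha):=\bigl(\widetilde A_\alpha+r_\alpha\,\mathrm{Id}+B_{\mathbf 0}\bigr)u_\alpha=-f_\alpha-\sum_{\mathbf 0<\beta\le\alpha}B_\beta(u_{\alpha-\beta}).
$$
This is triangular in the partial order on $\mathcal I$: once $u_\gamma$ is known for all $\gamma<\alpha$, the right-hand side is determined and $u_\alpha$ solves a single linear equation on $X$.

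The first task is to invert $T_\alpha$ uniformly in $\alpha$. Since $\widetilde A_\alpha+B_{\mathbf 0}$ is compact, $T_\alpha$ is a compact perturbation of the scalar operator $r_\alpha\,\mathrm{Id}$ and hence Fredholm of index zero; assumption \eqref{jezgro} together with the Fredholm alternative (\cite{Gilbarg}) then gives invertibility. The quantitative norm bound comes from the triangle inequality
$$
\|T_\alpha x\|_X\ge\bigl(-r_\alpha-\|\widetilde A_\alpha\|_{L(X)}-\|B_{\mathbf 0}\|_{L(X)}\bigr)\|x\|_X,
$$
which, combined with \eqref{uslov za ralpha} and \eqref{uslov za operator a}, yields $\|T_\alpha^{-1}\|_{L(X)}\le K$ uniformly in $\alpha$. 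Consequently the coordinate solutions obey
$$
\|u_\alpha\|_X\le K\|f_\alpha\|_X+K\sum_{\mathbf 0<\beta\le\alpha}\|B_\beta\|_{L(X)}\|u_{\alpha-\beta}\|_X.
$$

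It remains to turn this coordinatewise bound into a bound on $\|U\|_{X\otimes(S)_{-1,-p}}$, which is the main technical obstacle because $\|U\|$ appears on both sides of any naive a~priori estimate. The standard remedy, already used in Theorem~\ref{th polugr}, is to work on finite truncations $\mathcal I_{n,m}$ where all sums are automatically finite. Squaring, applying $(a+b)^2\le 2a^2+2b^2$, multiplying by $(2\mathbb N)^{-p\alpha}=(2\mathbb N)^{-p\beta}(2\mathbb N)^{-p(\alpha-\beta)}$ and splitting the weight evenly between the two factors of the convolution, the generalized Minkowski inequality (exactly as in the lemmas of Section~\ref{0.3}) yields
$$
\sum_{\alpha\in\mathcal I_{n,m}}\Bigl[\sum_{\mathbf 0<\beta\le\alpha}\|B_\beta\|\,\|u_{\alpha-\beta}\|_X\Bigr]^{2}(2\mathbb N)^{-p\alpha}\le\Bigl(\sum_{\beta>\mathbf 0}\|B_\beta\|(2\mathbb N)^{-p\beta/2}\Bigr)^{\!2}\sum_{\gamma\in\mathcal I_{n,m}}\|u_\gamma\|_X^2(2\mathbb N)^{-p\gamma}.
$$
Condition \eqref{uslov koren iz dva} makes the prefactor $2K^2\bigl(\sum_{\beta>\mathbf 0}\|B_\beta\|(2\mathbb N)^{-p\beta/2}\bigr)^{2}$ strictly less than~$1$, so the self-referential term can be absorbed into the left-hand side, producing a uniform bound on the truncated norm; letting $n,m\to\infty$ gives the desired estimate $\|U\|_{X\otimes(S)_{-1,-p}}\le C\|F\|_{X\otimes(S)_{-1,-p}}$. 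Uniqueness is immediate from invertibility of every $T_\alpha$: any solution of the homogeneous equation vanishes coordinate by coordinate by induction on the partial order on $\mathcal I$.
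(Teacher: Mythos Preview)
Your proof is correct and follows the same route as the paper's: reduce to the triangular system, invert each $T_\alpha$ via the Fredholm alternative together with the lower bound coming from \eqref{uslov za ralpha}--\eqref{uslov za operator a}, obtain the coordinatewise estimate $\|u_\alpha\|_X\le K\|f_\alpha\|_X+K\sum_{\beta>\mathbf 0}\|B_\beta\|\,\|u_{\alpha-\beta}\|_X$, and then close with the generalized Minkowski inequality and assumption \eqref{uslov koren iz dva}. Your use of the finite truncations $\mathcal I_{n,m}$ before passing to the limit is a small extra rigor step that the paper omits in this proof (though it employs it in Theorem~\ref{th polugr}); otherwise the arguments coincide.
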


\begin{proof}
Equation \eqref{elipt jedn} is equivalent to $ U - (\mathbf{\widetilde{A}}(U)+ \mathbf{C} U + U
+\mathbf{B}\lozenge U)= F$ and
$$\sum\limits_{\gamma\in \mathcal{I}}\left( u_\gamma - \widetilde{A}_\gamma u_\gamma- (1+ r_\gamma)\,  u_\gamma-
\sum\limits_{\alpha+\beta =\gamma} B_\alpha (u_{\beta})
\right) H_\gamma= \sum\limits_{\gamma\in \mathcal{I}} f_\gamma H_\gamma.$$ Due to uniqueness of the Wiener-It\^ o
chaos expansion this is equivalent to
\begin{equation}\label{ugamma}
 u_\gamma - \left( \widetilde{A}_\gamma + (1+ r_\gamma)Id + B_\mathbf{0} \right) u_\gamma =  f_\gamma + \sum\limits_{\mathbf 0<\beta\leq\gamma} B_\beta (u_{\gamma-\beta}),
\quad \gamma\in \mathcal{I}.
\end{equation}
By  \eqref{jezgro} it follows that for each $\gamma\in \mathcal{I}$ the homogeneous equation  $$  u_\gamma - \left( \widetilde{A}_\gamma + (1+ r_\gamma) Id + B_\mathbf{0} \right) u_\gamma =  0$$ has only trivial solution $u_\gamma=0$. Since the
operator $\widetilde{A}_\gamma + (1+ r_\gamma) Id + B_\mathbf{0} $ is compact, the classical Fredholm alternative implies
that for each $\gamma\in \mathcal{I}$ there exists a unique $u_\gamma$ that solves \eqref{ugamma}
and it is of the form
\begin{equation*}u_\gamma=  (Id - (( r_\gamma +1)\, Id + \widetilde{A}_\gamma + B_\mathbf{0}) )^{-1}
\left(f_\gamma+ \sum\limits_{\beta>\mathbf0} B_\beta
(u_{\gamma-\beta})\right),\, \, \,\quad \gamma\in \mathcal{I},\end{equation*}
so that
$$\|u_\gamma\|_X \leq \frac{1}{- r_\gamma - \|\widetilde{A}_\gamma\| - \|B_\mathbf{0}\|} \cdot \left( \|f_\gamma \|_X +
\sum\limits_{\beta>\mathbf0} \|B_\beta\|
\|u_{\gamma-\beta}\|_X \right), \quad \gamma\in \mathcal{I}.$$ We
will prove that $\sum\limits_{\gamma\in\mathcal I}u_\gamma\otimes
H_\gamma$ converges in $X\otimes(S)_{-1}$. Indeed,

\begin{eqnarray*}
\sum\limits_{\gamma\in\mathcal I}\|u_\gamma\|_X^2(2\mathbb N)^{-p\gamma}&\leq&
K^2\sum\limits_{\gamma\in\mathcal I}\left(\|f_\gamma\|_X +
\sum_{\gamma=\alpha+ \beta, \alpha>\mathbf{0}}\,
\|B_\alpha\| \|u_{\beta}\|_X \right)^2(2\mathbb
N)^{-p\gamma}\\
&\leq& 2 K^2\left(\sum_{\gamma\in\mathcal I}\|f_\gamma\|_X^2 (2\mathbb N)^{-p\gamma} +\sum_{\gamma\in\mathcal I}(\sum_{\gamma=\alpha+\beta, \alpha>\mathbf{0}}
\|B_\alpha\|\|u_\beta\|_X)^2  (2\mathbb N)^{-p\gamma}\right)\\
&\leq& 2 K^2\left(\sum_{\gamma\in\mathcal I}\|f_\gamma\|_X^2 (2\mathbb N)^{-p\gamma}+(\sum_{\alpha>\mathbf{0}}
\|B_\alpha\|(2\mathbb
N)^{-\frac{p\alpha}2})^2\sum\limits_{\beta\in\mathcal
I}\|u_\beta\|_X^2(2\mathbb N)^{-p\beta}\right).
\end{eqnarray*}

Therefore,
$$(1-2K^2(\sum\limits_{\alpha>\mathbf{0}}\|B_\alpha\|(2\mathbb N)^{-\frac{p\alpha}2})^2)\cdot \sum\limits_{\gamma\in\mathcal I}
\|u_\gamma\|_X^2(2\mathbb N)^{-p\gamma}\leq
2K^2\sum_{\gamma\in\mathcal I}\|f_\gamma\|_X^2(2\mathbb
N)^{-p\gamma}.$$ By assumption \eqref{uslov koren iz dva} we have that $M=1- 2K^2
(\sum\limits_{\alpha>\mathbf{0}}\|B_\alpha\|(2\mathbb
N)^{-\frac{p\alpha}2})^2 >0$. This implies
$$\sum_{\gamma\in\mathcal I}\|u_\gamma\|_X^2(2\mathbb N)^{-p\gamma}\leq \frac{2K^2}{M} \sum_{\gamma\in\mathcal I}\|f_\gamma\|_X^2(2\mathbb
N)^{-p\gamma}<\infty.$$
\end{proof}

\begin{example}\rm
We provide some special cases of equation \eqref{elipt jedn}.
\begin{itemize}
\item[1.] If $A_\alpha=0$ for all $\alpha\in \mathcal{I}$ and $B_\alpha$, $\alpha\in \mathcal{I}$ are second order strictly elliptic partial differential operators in divergent form
    \begin{equation}\label{divergent form}
    B_\alpha=\sum_{i=1}^n D_i(\sum_{j=1}^n
a^{ij}_\alpha(x)D_j+b^i_\alpha(x))+\sum_{i=1}^n
c^i_\alpha(x)D_i+d_\alpha(x)
    \end{equation}
     with essentially bounded coefficients, then  equation \eqref{elipt jedn} reduces to the elliptic equation
    \begin{equation} \mathbf{B} \lozenge U = F, \nonumber
    \end{equation}which was solved in \cite{ps} and \cite{DP2}.

\item[2.]  Let $ \widetilde{A}_\alpha = 0$ for all $\alpha\in \mathcal{I}$
and let  $B_\alpha$, $\alpha\in \mathcal{I}$, be second order strictly elliptic partial differential operators in divergent form \eqref{divergent form}. Let $\mathbf{C}=c \, P(\mathcal{R})$, for some $c\in \mathbb{R}$, where  $\mathcal{R}$ is the Ornstein-Uhlenbeck operator, $P$ a polynomial of degree $m$ with real coefficients and $P(\mathcal{R})$ the differential operator $P(\mathcal{R})=p_m
\mathcal{R}^m+p_{m-1}\mathcal{R}^{m-1}+...+p_1\mathcal{R}+p_0 Id$. Then, the corresponding eigenvalues are  $r_\alpha = c P (|\alpha|)$, $\alpha\in \mathcal{I}$.
Hence, equation \eqref{elipt jedn} transforms to the elliptic  equation with a perturbation term driven by the polynomial of the Ornstein-Uhlenbeck operator
\begin{equation*}
\mathbf{B}\lozenge U + c P(\mathcal{R})U  = F,
\end{equation*}
that was solved in \cite{JSAA}.
\end{itemize}
\end{example}


\section*{Acknowledgement}

The paper was supported by the projects \textit{Modeling and harmonic
analysis methods and PDEs with singularities}, No. 174024, and
\textit{Modeling and research methods of operational control of traffic
based on electric traction vehicles optimized by power consumption
criterion}, No. TR36047, both financed by the Ministry of Science,
Republic of Serbia and project No. 114-451-3605/2013 financed by
the Provincial Secretariat for Science of Vojvodina.


\begin{thebibliography}{99}

\bibitem{Dejv}
Applebaum, D.: On the infinitesimal generators of Ornstein-Uhlenbeck
processes with jumps in Hilbert spaces. \emph{Potential Anal.} \textbf{26}, (2007),
79--100.

\bibitem{Bog}
Bogachev, V. I.: Differentiable Measures and the Malliavin Calculus.
\emph{American Mathematical Society}, Providence, 2010.


\bibitem{Cat2}
Catuogno, P., Olivera, C.: On stochastic generalized functions. \emph{Infin.
Dimens. Anal. Quantum Probab. Relat. Top.} \textbf{14(2)}, (2011), 237--260.

\bibitem{Nagel}
Engel, K.J., Nagel, R.: One-Parameter Semigroups for Linear Evolution Equations. \emph{Springer-Verlag}, New York, 2000.

\bibitem{Gilbarg}
Gilbarg, D., Trudinger, N.S.: Elliptic Partial Differential Equations
of Second Order. \emph{Springer-Verlag}, Berlin, 1998.

\bibitem{Hida}
Hida, T., Kuo, H.-H., Pothoff, J., Streit, L.: White
Noise. An Infinite-dimensional Calculus. \emph{Kluwer Academic
Publishers Group}, Dordrecht, 1993.

\bibitem{HOUZ}
Holden, H., \O ksendal, B., Ub\o e, J., Zhang, T.: Stochastic Partial Differential Equations. A
Modeling, White Noise Functional Approach. Second Edition. \emph{Springer}, New York, 2010.

\bibitem{yuhu}
Hu, Y.: Chaos expansion of heat equations with
white noise potentials. \emph{Potential Anal.} \textbf{16}, (2002), 45--66.

\bibitem{grci}
Kalpinelli, E., Frangos, N., Yannacopoulos, A.: A Wiener chaos approach to hyperbolic SPDEs.
\emph{Stoch. Anal. Appl.} \textbf{29}, (2011), 237--258.

\bibitem{AADM}
Levajkovi\' c, T., Pilipovi\'c, S., Sele\v si, D.: Fundamental
equations with higher order Malliavin operators. \emph{Stochastics: An International Journal of Probability and Stochastic Processes}, accepted for publication.

\bibitem{JSAA}
Levajkovi\' c, T., Pilipovi\'c, S., Sele\v si, D.: The stochastic
Dirichlet problem driven by the Ornstein-Uhlenbeck operator:
Approach by the Fredholm alternative for chaos expansions.
\emph{Stoch. Anal. Appl.} \textbf{29}, (2011), 317-331.

\bibitem{Boris}
Lototsky, S., Rozovskii, B.: Stochastic partial
differential equations driven by purely spatial noise. \emph{SIAM J.
Math. Anal.} \textbf{41(4)}, (2009), 1295--1322.

\bibitem{Boris2}
Lototsky, S., Rozovskii, B.: Bilinear stochastic
elliptic equations. \emph{Quad. Mat.} \textbf{25}, (2010), 207--221.

\bibitem{Irina}
Melnikova, I.V., Alshanskiy, M.A.: Regularized
and generalized solutions of infinite-dimensional stochastic
problems. \emph{Sb. Math.} \textbf{202(11)}, (2011), 1565--1592.

\bibitem{Irina2}
Melnikova, I.V., Alshanskiy, M.A.: Generalized
solutions of abstract stochastic problems. \emph{Oper. Theory
Adv. Appl.} \textbf{231}, (2013), 341--352.

\bibitem{paz}
Pazy, A.: Semigroups of Linear Operators and Applications to Partial
Differential Equations. \emph{Springer-Verlag}, New York, 1983.


\bibitem{GRPW}
Pilipovi\'c, S., Sele\v si, D.: Expansion theorems for generalized random processes,
Wick products and applications to stochastic differential
equations.  \emph{Infin. Dimens. Anal. Quantum Probab. Relat. Top.}
\textbf{10(1)}, (2007), 79--110.

\bibitem{ps}
Pilipovi\'c, S., Sele\v si, D.:  On the generalized stochastic
Dirichlet problem - Part I: The stochastic weak maximum principle.
\emph{Potential Anal.} \textbf{32}, (2010), 363-387.

\bibitem{DP2}
Pilipovi\' c, S., Sele\v si, D.: On the generalized stochastic Dirichlet problem
-- Part II: Solvability, stability and the Colombeau case. \emph{Potential
Anal.} \textbf{33}, (2010), 263-289.

\bibitem{Proske}
Proske, F.: The stochastic transport equation driven by L\'evy white
noise. \emph{Commun. Math. Sci.} \textbf{2(4)}, (2004), 627--641.

\bibitem{FundSol}
Sele\v si, D.: Fundamental solutions of singular SPDEs. \emph{Chaos
Solitons Fractals} \textbf{44}, (2011), 526--537.


\end{thebibliography}
\end{document}